\newlength{\tabwidth}
\newlength{\tabheight}
\newlength{\tabrule}
\newlength{\tabwidthx}
\newlength{\tabheightx}
\def\gentabbox#1#2#3#4{\vbox to \tabheight{\setlength{\tabrule}{#3}%
  \setlength{\tabwidthx}{#1\tabwidth}\addtolength{\tabwidthx}{\tabrule}%

\setlength{\tabheightx}{#2\tabheight}\addtolength{\tabheightx}{-\tabheight}%
  \hbox to #1\tabwidth{%
    \hspace{-0.5\tabrule}\rule{\tabrule}{#2\tabheight}\hspace{-\tabrule}%
    \vbox to #2\tabheight{\hsize=\tabwidthx%
      \vspace{-0.5\tabrule}\hrule width\tabwidthx height\tabrule%
      \vspace{-0.5\tabrule}\vfil%
      \hbox to \tabwidthx{\hss#4\hss}%
        \vfil\vspace{-0.5\tabrule}%
      \hrule width\tabwidthx height\tabrule\vspace{-0.5\tabrule}}%
    \hspace{-\tabrule}\rule{\tabrule}{#2\tabheight}\hspace{-0.5\tabrule}}%
  \vspace{-\tabheightx}}}
\def\genblankbox#1#2{\vbox to \tabheight{\vfil\hbox to
#1\tabwidth{\hfil}}}
\def\tabbox#1#2#3{\gentabbox{#1}{#2}{0.4pt}{\strut #3}}
\newcommand{\field}{\mathbb}
\newcommand{\liealgebra}{\mathfrak}
\newcommand{\la}{\liealgebra}
\newcommand{\C}{{\field C}}
\newcommand{\Z}{{\field Z}}
\newcommand{\N}{{\field N}}
\renewcommand{\b}{\liealgebra b}
\newcommand{\n}{{\la n}}
\newcommand{\ga}{\alpha}
\newtheorem{prop}{Proposition}[section]
\newtheorem{lemma}[prop]{Lemma}
\newtheorem{theorem}[prop]{Theorem}
\newtheorem{proposition}[prop]{Proposition}
\theoremstyle{definition}
\newtheorem{remark}[prop]{Remark}
\newtheorem{example}[prop]{Example}
\newtheorem{definition}[prop]{Definition}
\newcommand{\frt}{\mathfrak{t}}
\newcommand{\bbN}{\mathbb{N}}
\newcommand{\bbP}{\mathbb{P}}
\begin{document}
\title[Richardson varieties stable under spherical Levi subgroups]
{Schubert calculus of Richardson varieties stable under spherical Levi subgroups}

\author{Benjamin J. Wyser}
\date{\today}


\begin{abstract}
We observe that the expansion in the basis of Schubert cycles for $H^*(G/B)$ of the class of a Richardson variety stable under a spherical Levi subgroup is described by a theorem of Brion.  Using this observation, along with a combinatorial model of the poset of certain symmetric subgroup orbit closures, we give positive combinatorial descriptions of certain Schubert structure constants on the full flag variety in type $A$.  Namely, we describe $c_{u,v}^w$ when $u$ and $v$ are inverse to Grassmannian permutations with unique descents at $p$ and $q$, respectively.  We offer some roughly stated conjectures for similar rules in types $B$ and $D$, associated to Richardson varieties stable under spherical Levi subgroups of $SO(2n+1,\C)$ and $SO(2n,\C)$, respectively.
\end{abstract}

\maketitle

\section{Introduction}
Suppose that $G$ is a complex reductive algebraic group, with $B,B^- \subseteq G$ opposite Borel subgroups intersecting in the maximal torus $T$.  Let $W = N_G(T)/T$ be the Weyl group.  For each $w \in W$, there exists a \textit{Schubert class} $S_w = [\overline{B^-wB/B}] \in H^*(G/B)$.  It is well-known that the classes $\{S_w\}_{w \in W}$ form a $\Z$-basis for $H^*(G/B)$.  As such, for any $u,v \in W$, we have
\[ S_u \cdot S_v = \displaystyle\sum_{w \in W} c_{u,v}^w S_w \]
in $H^*(G/B)$, for uniquely determined integers $c_{u,v}^w$.  These integers are the \textit{Schubert structure constants}.

The structure constants are known to be non-negative for geometric reasons, and are readily computable.  However, it is a long-standing open problem, even in type $A$, to give a \textit{positive} (i.e. subtraction-free) formula for $c_{u,v}^w$ in terms of the elements $u$, $v$, and $w$.

In type $A$, where $W=S_n$, there are numerous partial results which give positive formulas for structure constants $c_{u,v}^w$ in special cases.  Perhaps most notably, when $u,v$ are ``Grassmannian" permutations (each having a unique descent in the same place), the Schubert classes $S_u,S_v \in H^*(G/B)$ are pulled back from Schubert classes in the cohomology of a Grassmannian, and their products are determined by the classical Littlewood-Richardson rule, or by the equivalent ``puzzle rule" of \cite{Knutson-Tao-Woodward-04}.  Other examples include
\begin{itemize}
	\item Monk's rule (\cite{Monk-59}), which describes structure constants $c_{u,s_i}^w$ with $u \in W$ any permutation, and $s_i = (i,i+1)$ a simple transposition;
	\item An analogue of Pieri's rule for Grassmannians, which generalizes Monk's rule.  The formula determines $c_{u,v}^w$ when $u \in W$ is any permutation, and $v$ is a Grassmannian permutation of a certain ``shape" (\cite{Sottile-96});
	\item A rule due to M. Kogan (\cite{Kogan-01}) which describes $c_{u,v}^w$ when $u$ is a Grassmannian permutation with unique descent at $k$, and $v$ is a permutation all of whose descents are in positions at most $k$.  (Note that this generalizes the Littlewood-Richardson rule mentioned above.)
	\item A rule due to I. Coskun (\cite{Coskun-09}), which gives a positive description of structure constants in the cohomology ring of a two-step flag variety in terms of ``Mondrian tableaux".  (A manuscript on an extension of this rule to arbitrary partial flag varieties, currently available on I. Coskun's webpage, is described there as ``under revision"\footnote{Per \url{http://www.math.uic.edu/~coskun/}, as of September 4, 2012, the paper is described as follows:  ``Currently under revision. This is a preliminary version of a Littlewood-Richardson rule for arbitrary partial flag varieties. Any comments, corrections and suggestions are welcome."}.)
\end{itemize}

Although computing an arbitrary Schubert constant positively remains a difficult problem, it can be easier to compute certain Schubert constants which are somehow special with respect to a Levi subgroup of $G$.  For example, in \cite{Richmond-12} the problem of computing structure constants associated to ``Levi-movable" intersections in an arbitrary flag manifold is reduced to (easier) computations in the cohomology of various $G/P$ with $P$ maximal parabolic.  The main result of this paper (Theorem \ref{thm:structure_constants}) is another special case rule in type $A$, along the lines of those mentioned above, which gives a positive description of Schubert constants $c_{u,v}^w$ associated to Richardson varieties stable under the action of a special class of Levi subgroup.  Applying the same general observation which allows us to deduce this rule to other classical groups, we also conjecture similar special-case formulas for Schubert constants in types $B$ and $D$.

The aforementioned general observation is as follows:  Suppose that $P$ is a standard parabolic subgroup containing $B$, with opposite parabolic $P^-$.  The intersection $P \cap P^-$ is a common Levi factor $L$ of both parabolics.  Suppose further that $L$ is \textit{spherical}, i.e. that it acts with finitely many orbits on $G/B$.  Then if $X_u = \overline{BuB/B}$ is a Schubert variety stable under $P$, and $X^v = \overline{B^-vB/B}$ is an opposite Schubert variety stable under $P^-$, then the \textit{Richardson variety} $X_u^v := X_u \cap X^v$ is stable under $L$.  Since $L$ has finitely many orbits on $G/B$, it of course has finitely many orbits on $X_u^v$, and so there is a dense $L$-orbit on $X_u^v$.  Thus the Richardson variety $X_u^v$ is the closure of an $L$-orbit.

Since the intersection $X_u \cap X^v$ is reduced and proper, the class $[X_u^v] \in H^*(G/B)$ is precisely the product
\begin{equation}\label{eqn:classes_of_richardson_varieties}
	[X_u] \cdot [X^v] = [X^{w_0u}] \cdot [X^v] = S_{w_0u} \cdot S_v,
\end{equation}
so knowing the class of a Richardson variety in the Schubert basis amounts to knowing some Schubert constants.  As it turns out, there is a theorem (Theorem \ref{thm:brion}) due to M. Brion on expressing the class of any spherical subgroup orbit closure as a sum of Schubert cycles.  The rule is in terms of weighted paths in the ``weak order graph" for the set $L \backslash G/B$ of $L$-orbits on $G/B$.  By what we have observed thus far, this theorem applies directly to Richardson varieties stable under spherical Levi subgroups.

To turn these simple observations into a useful combinatorial rule, however, there is a bit of work to be done.  First, one must determine exactly which Richardson varieties are stable under $L$.  This turns out to be an easy task --- they are of the form $X_u^v$ where $u$ (respectively, $v$) is a maximal (respectively, minimal) length coset representative of $W_P \backslash W$, with $W_P$ the parabolic subgroup of $W$ associated to $P$.  Next, one must have a concrete combinatorial model of the set $L \backslash G/B$, as well as an understanding of its weak order in terms of that model.  Finally, one must be able to correctly match up Richardson varieties stable under $L$ with the appropriate $L$-orbit closures.  That is, given such a Richardson variety, one must be able to explicitly compute the combinatorial invariant for the $L$-orbit closure with which the Richardson variety coincides.

The pairs $(G,L)$ with $G$ a complex simple group and $L$ a spherical Levi subgroup of $G$ are classified, see \cite[Theorem 4.1]{Brundan-98}.  Those pairs for which $G$ is a \textit{classical} group are
\begin{enumerate}
	\item $(SL(p+q,\C), S(GL(p,\C) \times GL(q,\C)))$
	\item $(SO(2n+1,\C), \C^* \times SO(2n-1,\C))$
	\item $(SO(2n+1,\C), GL(n,\C))$
	\item $(Sp(2n,\C), \C^* \times Sp(2n-2,\C))$
	\item $(Sp(2n,\C), GL(n,\C))$
	\item $(SO(2n,\C), \C^* \times SO(2n-2,\C))$
	\item $(SO(2n,\C), GL(n,\C))$
\end{enumerate}
Cases (5) and (7) are treated in the paper \cite{Wyser-11b}.  In hindsight, the results of that paper are now seen to be specific examples of the more general observations made to this point.

In this paper, we carry out the steps described above for case (1), the pair $(SL(p+q,\C),S(GL(p,\C) \times GL(q,\C)))$.  Noting that each element of $GL(p,\C) \times GL(q,\C)$ can be written as an element of $S(GL(p,\C) \times GL(q,\C))$ times a scalar, and that this scalar acts trivially on the type $A$ flag variety, we see that the orbits of $S(GL(p,\C) \times GL(q,\C))$ on $SL(p+q,\C)/B$ coincide with the orbits of $GL(p,\C) \times GL(q,\C)$ on $GL(p+q,\C)/B$.  Thus we simplify notation a bit by considering instead the pair $(GL(p+q,\C),GL(p,\C) \times GL(q,\C))$.

After handling the type $A$ case, we conclude by offering some roughly stated conjectures related to cases (2) and (6), and some passing thoughts on cases (3) and (4).

\subsection*{Acknowledgements}
The author thanks William A. Graham, his research advisor at the University of Georgia, for his guidance throughout the author's doctoral thesis project, which led to the discovery of the results presented here.  The author further thanks Professor Graham for his assistance and advice in reading and editing early versions of this manuscript.

The author also thanks Laurent Manivel, whose helpful comments allowed for simplification of the proofs given in an earlier version of this paper, and inspired the conjectures and comments in Section 4.

Finally, the author thanks Allen Knutson and Alexander Yong for helpful email exchanges, and two anonymous referees for their careful readings and helpful suggestions.

\section{Preliminaries}\label{sect:prelims}
\subsection{Notation}
We establish some standard notation:

$[n]$ shall denote the set $\{1,\hdots,n\}$.

The long element of a Weyl group $W$ will always be denoted $w_0$.  When $W=S_n$, $w_0$ is the permutation given by $i \leftrightarrow n+1-i$ for all $i$.

When discussing Weyl groups outside of type $A$, we will be using signed permutations, i.e. bijections $\sigma$ on the set $\{\pm 1,\hdots,\pm n\}$ having the property that $\sigma(-i) = -\sigma(i)$ for each $i$.  Such a signed permutation will be denoted in one-line notation with bars over some of the numbers to indicate negative values.  For example, $1 \overline{3} 4 \overline{2}$ denotes the signed permutation sending $1$ to $1$, $2$ to $-3$, $3$ to $4$, and $4$ to $-2$.

The closed points of the varieties $G/B$ correspond to flags, which we denote $F_{\bullet}$, shorthand for the flag
\[ \{0\} = F_0 \subset F_1 \subset \hdots \subset F_{n-1} \subset F_n = \C^n, \]
with $\dim(F_i) = i$.  In type $A$, the flag variety $GL(n,\C)/B$ parametrizes all complete flags on $\C^n$.
 
\subsection{Permutations and the Bruhat order}
For any permutation $w \in S_n$, and for each $(i,j) \in [n] \times [n]$, define
\[ r_w(i,j) = \#\{k \leq i \ \vert \ w(k) \leq j\}. \]

\begin{definition}
We refer to the matrix $(e_{i,j}) = (r_w(i,j))$ as the \textbf{rank matrix} for the permutation $w$.
\end{definition}

We give two equivalent definitions of the Bruhat order on $S_n$.  We will make use of both definitions later.  That these two definitions are equivalent to each other (and to the ``usual" definition of the Bruhat order) is standard --- see \cite{Deodhar-77} or \cite[\S 10.5]{Fulton-YoungTableaux}.
\begin{definition}\label{def:bruhat-1}
The \textbf{Bruhat order} on $S_n$ is the partial order defined as follows:  $u \leq v$ if and only if
\[ r_u(i,j) \geq r_v(i,j) \text{ for all } i,j. \]
\end{definition}

\begin{definition}\label{def:bruhat-2}
Here is an alternative definition of the Bruhat order on $S_n$:  $u \leq v$ if and only if for any $i \in [n]$, when $\{u(1),\hdots,u(i)\}$ and $\{v(1),\hdots,v(i)\}$ are each arranged in ascending order, each element of the first set is less than or equal to the corresponding element of the second set.
\end{definition}

\subsection{Schubert varieties and Richardson varieties}\label{sect:schubert_defs}
The facts of this section are all standard, and can be found in, e.g., \cite{Fulton-YoungTableaux} and/or \cite{Brion-05}.

In type $A$, where $W = S_n$ and $G/B$ is the variety of complete flags on $\C^n$, given a permutation $u \in W$, the Schubert cell $X_u^0 = BuB/B$ consists of the following points:
\[ X_u^0 = \{F_{\bullet} \mid \dim(F_i \cap E_j) = r_u(i,j) \ \forall i,j\}, \]
where for each $j$, $E_j$ denotes the span of the first $j$ standard basis vectors, $\C \langle e_1,\hdots,e_j \rangle$.

The Schubert variety $X_u = \overline{BuB/B}$ is then 
\[ X_u = \{F_{\bullet} \mid \dim(F_i \cap E_j) \geq r_u(i,j) \ \forall i,j\}. \]

Similarly, for the permutation $v \in S_n$, the opposite Schubert cell $X^v_0 = B^-vB/B$ consists of the following points:
\[ X^v_0 = \{F_{\bullet} \mid \dim(F_i \cap \widetilde{E_j}) = r_{w_0v}(i,j) \ \forall i,j\}, \]
where for each $j$, $\widetilde{E_j}$ denotes the span of the \textit{last} $j$ standard basis vectors, $\C \langle e_n,\hdots,e_{n-j+1} \rangle$.

The opposite Schubert variety $X^v = \overline{B^-vB/B}$ is then
\[ X^v = \{F_{\bullet} \mid \dim(F_i \cap \widetilde{E_j}) \geq r_{w_0v}(i,j) \ \forall i,j\}. \]

We have the following standard facts about Richardson varieties:
\begin{prop}\label{prop:basic_richardson_varieties}
For $u,v \in W$, $X_u^v := X_u \cap X^v$ is non-empty if and only if $u \geq v$.  In this event, the intersection $X_u \cap X^v$ is proper and reduced, and has dimension $l(u) - l(v)$.
\end{prop}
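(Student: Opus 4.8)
The plan is to prove the three assertions — nonemptiness iff $u \geq v$, properness/reducedness of the intersection, and the dimension formula — by reducing everything to the rank-function descriptions of $X_u$ and $X^v$ recorded just above, together with the two characterizations of the Bruhat order from Definitions \ref{def:bruhat-1} and \ref{def:bruhat-2}. For the nonemptiness criterion I would argue as follows. Recall $X_u = \{F_\bullet \mid \dim(F_i \cap E_j) \geq r_u(i,j) \ \forall i,j\}$ and $X^v = \{F_\bullet \mid \dim(F_i \cap \widetilde{E_j}) \geq r_{w_0 v}(i,j) \ \forall i,j\}$. The opposite Schubert cell $X^v_0$ is a $B^-$-orbit, and $w_0 \cdot X^v_0$ (acting by the flag $\widetilde{E_\bullet} \mapsto E_\bullet$, i.e. conjugating $B^-$ to $B$) is exactly the $B$-Schubert cell $X^0_{w_0 v}$; so up to this fixed automorphism of $G/B$, an intersection $X_u \cap X^v$ is carried to $X_u \cap w_0 X^{w_0 v}$... rather than chase this, the cleaner route is: $X_u \cap X^v$ contains the $T$-fixed point $eB$ corresponding to a permutation $w$ precisely when $wB \in X_u$ and $wB \in X^v$, i.e. (by the standard fact that $w'B \in X_w \iff w' \leq w$, and $w'B \in X^w \iff w' \geq w$) precisely when $v \leq w \leq u$. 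Hence if $u \geq v$ the interval $[v,u]$ is nonempty and $X_u^v \neq \emptyset$; conversely if $X_u^v \neq \emptyset$ then, being $T$-stable and projective, it contains a $T$-fixed point, forcing some $w$ with $v \leq w \leq u$, so $v \leq u$.

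For properness and reducedness I would invoke Kleiman transversality: $X_u = \overline{BuB/B}$ and $X^v = \overline{B^- v B/B}$ are orbit closures for the two Borel subgroups $B$ and $B^- = \dot{w}_0 B \dot{w}_0^{-1}$, which are $G$-conjugate; a generic translate $gX^v$ meets $X_u$ properly and transversally (hence in a reduced scheme) on the smooth locus, and one then upgrades from "generic $g$" to "$g = e$" using that both varieties are already in relative position given by opposite Borels — the standard statement being that opposite Schubert varieties always meet properly and transversally. This is exactly the content cited as ``standard'' from \cite{Fulton-YoungTableaux} and \cite{Brion-05}, so I would simply cite it, or alternatively give the rank-matrix argument: on the open cell where the relevant intersection dimensions are all exactly the prescribed $r_u(i,j)$ and $r_{w_0 v}(i,j)$, local coordinates show the defining conditions cut out a smooth subvariety of the expected codimension, and properness of the intersection is equivalent to the dimension count below.

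For the dimension formula $\dim X_u^v = \ell(u) - \ell(v)$: we have $\dim X_u = \ell(u)$ and $\mathrm{codim}\, X^v = \ell(v)$ in $G/B$ (so $\dim X^v = \ell(w_0) - \ell(v)$). Given properness/transversality, $\mathrm{codim}_{X_u} X_u^v = \mathrm{codim}_{G/B} X^v = \ell(v)$, whence $\dim X_u^v = \ell(u) - \ell(v)$. Alternatively, and more in the combinatorial spirit of the paper, one observes that $X_u^v$ is $T$-stable with $T$-fixed points indexed by the Bruhat interval $[v,u]$, and that its open dense cell is the intersection $BuB/B \cap B^- v B/B$, which is nonempty (this is the nontrivial input: the cell intersection is nonempty exactly when $v \leq u$, proved by the rank-function or the Definition \ref{def:bruhat-2} ascending-sequence comparison) and isomorphic to an affine space of dimension $\ell(u) - \ell(v)$ via the standard parametrization of $BuB/B$ by $\ell(u)$ coordinates, of which $\ell(v)$ are frozen by the opposite-cell conditions.

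The main obstacle is the nonemptiness of the \emph{open cell} intersection $X_u^0 \cap X^v_0$ when $v \leq u$ — equivalently, that the dense $T$-stable piece of $X_u^v$ is genuinely the product of the two cells and not a proper subvariety. Everything else (the $T$-fixed-point count, the dimension bookkeeping, citing Kleiman/transversality) is routine once this is in hand; I expect to handle it either by the explicit normal-form coordinates for $BuB/B$ and checking the opposite-Schubert conditions impose exactly $\ell(v)$ independent vanishings, or by citing the relevant lemma from \cite{Fulton-YoungTableaux} directly.
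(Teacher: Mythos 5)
The paper gives no proof of this proposition: the opening sentence of \S\ref{sect:schubert_defs} declares the facts standard and defers to \cite{Fulton-YoungTableaux} and \cite{Brion-05}. So there is no ``paper's proof'' to compare against -- you are filling in a citation. Your $T$-fixed-point argument for the nonemptiness criterion is correct and is the standard one: $wB \in X_u \Leftrightarrow w \le u$, $wB \in X^v \Leftrightarrow w \ge v$, and a nonempty $T$-stable projective variety must contain a $T$-fixed point. Likewise the derivation of $\dim X_u^v = \ell(u)-\ell(v)$ from properness is fine.

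The genuine gap is in the properness/reducedness step, and it is exactly where you wave your hands. Kleiman's transversality theorem only gives you that $gX^v$ meets $X_u$ properly and transversally (on the smooth loci) for $g$ in a dense open subset of $G$; it says nothing a priori about $g = e$, and ``both varieties are already in relative position given by opposite Borels'' is not by itself an argument that $e$ lies in that open set. The usual ways to close this gap are: (i) a degeneration argument using a one-parameter subgroup of $T$ that contracts $G/B$ onto the big cell, showing that the dimension of $X_u \cap gX^v$ can only drop as $g \to e$, so that properness for generic $g$ forces properness at $g=e$; (ii) a direct coordinate computation on the cell $X_u^0 \cap X_0^v$ showing it is nonempty and of dimension $\ell(u)-\ell(v)$ whenever $v \le u$, which you correctly flag as the ``main obstacle'' but do not carry out; or (iii) invoking the result directly (Richardson, or the Frobenius-splitting treatment of Mehta--Ramanathan / Ramanan--Ramanathan, both reported in \cite{Brion-05}). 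Separately, note that Kleiman only yields \emph{generic} reducedness of the intersection; full scheme-theoretic reducedness of $X_u \cap X^v$ -- including along the boundary -- is strictly stronger and is the deepest part of the proposition. It is usually proved by Frobenius splitting, not by generic transversality, and your sketch does not address it. Since the paper itself treats all of this as a citation, your plan is an acceptable outline, but if you intend to actually prove it rather than cite it, (ii) or (iii) is where the work must go, and the reducedness claim needs its own argument beyond Kleiman.
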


We now note which Schubert varieties are stable under $P$, and which opposite Schubert varieties are stable under $P^-$.  Via the obvious bijection
\[ B \backslash G/P \leftrightarrow P \backslash G/B \]
given by $BgP \leftrightarrow Pg^{-1}B$, and using the standard fact that the $B$-orbits on $G/P$ are of the form 
\[ \{BwP/P\}_{wW_P \in W/W_P}, \] 
we see that the $P$-orbits on $G/B$ are of the form 
\[ \{PwB/B\}_{W_Pw \in W_P \backslash W}. \]

These $P$-orbits are of course $B$-stable, and each is a union of Schubert cells $X_w^0$ with $w$ running over all representatives of a single coset of $W_P \backslash W$.  Thus $P$-orbit closures on $G/B$ are Schubert varieties of the form $X_u$ where $u$ is a maximal length coset representative of $W_P \backslash W$.  These are precisely the Schubert varieties which are stable under $P$.

Arguing identically, one sees that the $P^-$-orbit closures on $G/B$ are translated Schubert varieties $w_0X_{w_0v}$ with $w_0v$ a maximal length element of $W_P \backslash W$.  These are precisely opposite Schubert varieties $X^v$ with $v$ a \textit{minimal} length coset representative of $W_P \backslash W$.  These are the opposite Schubert varieties stable under $P^-$.

This adds up to the following easy
\begin{proposition}\label{prop:richardsons-stable-under-l}
	Suppose that $P,P^-$ are opposite parabolics, intersecting in their common Levi factor $L$.  If $u$ is a maximal length coset representative of $W_P \backslash W$, $v$ is a minimal length coset representative of $W_P \backslash W$, and $u \geq v$, then the Richardson variety $X_u^v$ is stable under $P \cap P^- = L$.
\end{proposition}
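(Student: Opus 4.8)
The plan is to assemble the statement directly from the orbit-closure descriptions established in the paragraphs immediately preceding Proposition~\ref{prop:basic_richardson_varieties}, together with the standard facts recorded in that proposition. The one conceptual point is that $L = P \cap P^-$ is contained in \emph{both} $P$ and $P^-$, so any subvariety of $G/B$ that is simultaneously stable under $P$ and under $P^-$ is automatically stable under $L$.

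First I would record the two stability inputs. By the identification of the $P$-orbit closures on $G/B$ with the Schubert varieties $X_u$ for $u$ a maximal length representative of $W_P \backslash W$, the hypothesis on $u$ says precisely that $X_u$ is stable under $P$. Dually, by the identification (obtained by translating the previous description by $w_0$) of the $P^-$-orbit closures on $G/B$ with the opposite Schubert varieties $X^v$ for $v$ a minimal length representative of $W_P \backslash W$, the hypothesis on $v$ says precisely that $X^v$ is stable under $P^-$. Next I would intersect: since $L \subseteq P$, the $L$-action on $G/B$ preserves $X_u$, and since $L \subseteq P^-$, it also preserves $X^v$; hence $L$ preserves $X_u \cap X^v = X_u^v$. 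Finally, the hypothesis $u \geq v$ together with Proposition~\ref{prop:basic_richardson_varieties} guarantees that $X_u^v$ is nonempty (indeed a genuine reduced, irreducible Richardson variety of dimension $l(u) - l(v)$), so the conclusion is not vacuous.

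I do not expect a real obstacle here: this is an ``easy'' proposition whose content is entirely the correct bookkeeping of which Schubert and opposite Schubert varieties are parabolic-stable, and that bookkeeping has already been carried out in Section~\ref{sect:schubert_defs}. The only point worth double-checking is that it is the \emph{maximal} length condition on $u$ that yields $P$-stability of $X_u$ while it is the \emph{minimal} length condition on $v$ that yields $P^-$-stability of $X^v$; this asymmetry is exactly what the bijection $B \backslash G/P \leftrightarrow P \backslash G/B$ and the subsequent $w_0$-translation argument produce, so no further work is needed.
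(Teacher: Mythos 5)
Your argument is correct and is exactly the one the paper intends: the discussion preceding the proposition establishes that $X_u$ is $P$-stable for $u$ a maximal length representative and that $X^v$ is $P^-$-stable for $v$ a minimal length representative, and the proposition then follows by intersecting and noting $L = P \cap P^- \subseteq P, P^-$. The paper simply states this as an "easy" consequence of that discussion rather than writing it out, so there is nothing to add.
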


\subsection{The weak order on spherical subgroup orbits}
Let $G$ be any complex, reductive algebraic group, with $H$ a spherical subgroup of $G$.  (Recall that this means that $H$ has finitely many orbits on $G/B$.)

The orbits of $H$ on $G/B$ are partially ordered by closure containment:  $Q_1 \leq Q_2 \Leftrightarrow \overline{Q_1} \subseteq \overline{Q_2}$.  We shall refer to this order as the ``closure order" or the ``full closure order".  A weaker order, which we call the ``weak order" or ``weak closure order", can be defined as follows:  For any simple root $\ga \in \Delta(G,T)$, let $P_{\alpha}$ denote the standard minimal parabolic subgroup of type $\ga$, and let
\[ \pi_{\ga}: G/B \rightarrow G/P_{\ga} \]
be the natural projection.  This is a locally trivial fiber bundle with fiber $P_{\ga}/B \cong \bbP^1$.  Given any $H$-orbit $Q$, one may consider the set $Z_{\ga}(Q):=\pi_{\ga}^{-1}(\pi_{\ga}(Q))$.  Because the map $\pi_{\ga}$ is $H$-equivariant, $Z_{\ga}(Q)$ is stable under $H$.  Assuming $H$ is connected, $Z_{\ga}(Q)$ is also irreducible, and so it contains a dense $H$-orbit.  (In the event that $H$ is disconnected, one notes that the component group of $H$ acts transitively on the irreducible components of $Z_{\ga}(Q)$, and from this it again follows that $Z_{\ga}(Q)$ has a dense $H$-orbit.)  We denote this dense orbit by $s_{\ga} \cdot Q$.

If $\dim(\pi_{\ga}(Q)) < \dim(Q)$, then $s_{\ga} \cdot Q = Q$.  However, if $\dim(\pi_{\ga}(Q)) = \dim(Q)$, then $s_{\ga} \cdot Q = Q'$ for some $Q' \neq Q$ with $\dim(Q') = \dim(Q) + 1$.

\begin{definition}
The \textbf{weak closure order} (or simply the \textbf{weak order}) is the partial order on $H$-orbits generated by relations of the form $Q \prec Q' \Leftrightarrow Q' = s_{\ga} \cdot Q \neq Q$ for some $\ga \in \Delta(G,T)$.
\end{definition}

Note that we may just as well speak of the weak order on orbit \textit{closures}.  Supposing that $Y,Y'$ are the closures of orbits $Q,Q'$, respectively, we say that $Y' = s_{\ga} \cdot Y$ if and only if $Q' = s_{\ga} \cdot Q$, if and only if $Y' = \pi_{\ga}^{-1}(\pi_{\ga}(Y))$.

If $Y' = s_{\ga} \cdot Y \neq Y$, one can consider the degree of the restricted morphism $\pi_{\ga}|_Y$ over its image.  This degree is always either $1$ or $2$.  We offer some brief explanation in the special case when $H$ is a \textit{symmetric subgroup}, i.e. the fixed points of an involution of $G$.  In this setting, we have the following terminology:  Either
\begin{enumerate}
	\item $\ga$ is ``complex" for $Q$; or
	\item $\ga$ is ``non-compact imaginary" for $Q$.
\end{enumerate}

The latter case breaks up into two subcases, known as ``type I" and ``type II".  These cases are differentiated by the $H$-orbit structure on the set $Z_{\ga}(Q)$ defined above.  In the ``type I" case, $Z_{\ga}(Q)$ is comprised of the dense orbit $Q'$, the orbit $Q$, and one other orbit $s_{\ga} \times Q$.  Here, $\times$ denotes the ``cross-action" of the Weyl group $W$ on $H \backslash G/B$, defined as
\[ w \times (H \cdot gB) = H \cdot gw^{-1}B. \]
In the ``type II" case, $Z_{\ga}(Q)$ is comprised simply of the dense orbit $Q'$ and the orbit $Q$, and in fact, $s_{\ga} \times Q = Q$.  In particular, if one knows that $\ga$ is non-compact imaginary for $Q$, then whether it is type I or type II depends only on whether $Q$ is fixed by the cross-action of $s_{\ga}$.

With all of this said, the result is as follows:
\begin{prop}
Suppose that $Y' = s_{\ga} \cdot Y \neq Y$.  If $\ga$ is complex or non-compact imaginary type I for $Q$, then $\pi_{\ga}|_Y$ is birational over its image.  If $\ga$ is non-compact imaginary type II for $Q$, then $\pi_{\ga}|_Y$ has degree 2 over its image.
\end{prop}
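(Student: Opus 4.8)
The plan is to analyze the restricted morphism $\pi_{\ga}|_Y \colon Y \to \pi_{\ga}(Y)$ by working over the dense orbit $Q'$ in $Y' = s_\ga \cdot Y$ and computing the degree fiberwise. First I would observe that since $\pi_\ga$ is a $\bbP^1$-bundle and $Y' = \pi_\ga^{-1}(\pi_\ga(Y))$ is its total restriction over $\pi_\ga(Y) = \pi_\ga(Y')$, the fibers of $\pi_\ga|_{Y'}$ over points of the image are all of $\bbP^1$, while the fibers of $\pi_\ga|_Y$ are proper closed subsets of $\bbP^1$, hence finite (when $Y \neq Y'$, so that $\dim Y = \dim Y' - 1$). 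Since degree is constant over a dense open subset of the image, it suffices to compute the cardinality of a generic fiber $F = \pi_\ga^{-1}(x) \cap Y$ for $x$ in the image of the dense orbit $Q'$. The key point is that $\pi_\ga^{-1}(x) \cong \bbP^1$ meets $Y$ in finitely many points, and each such point lies in some $H$-orbit contained in $Y' = \overline{Q'}$; because $F$ is stable under the stabilizer in $H$ of the fiber, and this stabilizer acts on $\bbP^1$, the number of points is governed by the orbit decomposition of $Z_\ga(Q) = Y'$.

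The main computation then reduces to the $H$-orbit structure on $Z_\ga(Q)$, which is exactly what the ``type I / type II'' dichotomy records. In the complex case and the non-compact imaginary type I case, I would argue that the generic fiber $\pi_\ga^{-1}(x) \cap Y$ consists of a single point: in the type I picture, $Z_\ga(Q)$ decomposes as $Q' \sqcup Q \sqcup (s_\ga \times Q)$, and over a generic point of the base the fiber $\bbP^1$ meets $\overline{Q}$ (which is $Y$, up to the relevant components) in one point and $\overline{s_\ga \times Q}$ in one point, with the two lower orbits accounting for the two ``special'' points of $\bbP^1$ and $Q'$ filling in the rest; thus $\pi_\ga|_Y$ is generically one-to-one, hence birational over its image (using that both $Y$ and $\pi_\ga(Y)$ are varieties and the map is dominant of degree $1$). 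The complex case is handled the same way, appealing to the standard fact (found in the references on symmetric orbits, e.g. Richardson–Springer) that the fiber geometry is the same as type I. In the non-compact imaginary type II case, $Z_\ga(Q) = Q' \sqcup Q$ with $s_\ga \times Q = Q$, so $\overline{Q} = Y$ is the unique lower-dimensional orbit closure and the generic fiber $\bbP^1$ meets it in the two special points (these get swapped, or fixed, by the residual involution coming from $s_\ga$ acting on the fiber), giving $\deg(\pi_\ga|_Y) = 2$.

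Concretely, the cleanest route is probably to pass to a single fiber $P_\ga/B \cong \bbP^1$ over a point of $\pi_\ga(Q')$ and use the action on it of the subgroup $L_\ga$ of $H$ stabilizing that fiber (this is essentially a rank-one reductive group or a torus, depending on the case), reducing the entire statement to an elementary analysis of finitely many orbits of a rank-one group on $\bbP^1$. The degree of $\pi_\ga|_Y$ is then the number of points of the intersection of this $\bbP^1$ with the closure of the relevant lower $L_\ga$-orbit, counted with multiplicity $1$ because everything in sight is reduced (Proposition \ref{prop:basic_richardson_varieties} guarantees reducedness in the cases we care about, and in general the reducedness of the scheme-theoretic fiber in the symmetric case is standard).

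I expect the main obstacle to be the bookkeeping needed to pin down precisely which points of the generic $\bbP^1$-fiber lie on $Y$ versus on neighboring orbit closures — in other words, showing rigorously that in the type I / complex cases the ``extra'' intersection point belongs to $s_\ga \times Q$ rather than to $Q$, so that only one point of the fiber lands on $Y$. This is where one genuinely needs the structure theory of symmetric subgroup orbits (the Richardson–Springer monoid action, or an explicit local model of the $\bbP^1$-bundle near $Q$), rather than soft dimension-counting; once that identification is in hand, the degree count is immediate. An alternative, and perhaps slicker, approach would be to cite the known formula for how the classes $[\overline{Q}]$ transform under the operators $\pi_\ga^* (\pi_\ga)_*$ in $H^*(G/B)$ — the coefficient appearing there is exactly this degree — but since the proposition is stated as a standalone geometric fact, I would give the direct fiberwise argument above.
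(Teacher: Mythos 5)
The paper does not prove this proposition; immediately after its statement it defers the proof to Richardson and Springer (\cite{Richardson-Springer-90}). So there is no ``paper's own proof'' to compare against, and your task was essentially to reconstruct the standard argument from scratch.

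Your sketch follows the right line: restrict to a single fiber $P_\ga/B\cong\bbP^1$ over a generic point of $\pi_\ga(Q')$, let the isotropy subgroup $L_\ga\subset H$ of that fiber act, and count the points of the generic fiber of $\pi_\ga|_Y$ by reading off which $L_\ga$-orbits on $\bbP^1$ lie in $Y$. You correctly isolate the crux --- identifying, in the type~I case, which of the two $L_\ga$-fixed points on $\bbP^1$ sits on $Q$ as opposed to $s_\ga\times Q$ --- and you are right that this requires the actual structure theory (Cayley transforms / the local $SL_2$ model), not just a dimension count. The closing remark about multiplicity is unnecessary in characteristic zero (the generic fiber of a finite dominant morphism of varieties over $\C$ is automatically reduced), and the appeal to Proposition~\ref{prop:basic_richardson_varieties} is off-target since that proposition concerns Richardson varieties, not arbitrary orbit closures, but neither issue affects the argument.

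One genuine inaccuracy worth flagging: you say the complex case ``is handled the same way'' because ``the fiber geometry is the same as type~I.'' It is not. For a complex ascent $\alpha$, $Z_\ga(Q)$ consists of only \emph{two} $H$-orbits, $Q$ and $Q'$; the subgroup $L_\ga$ acting on the $\bbP^1$-fiber is (the image of) a Borel of a rank-one group, with one fixed point and one open orbit, rather than a torus with two fixed points. The conclusion (the generic fiber of $\pi_\ga|_Y$ is a single point, hence degree $1$) is the same, but the orbit decomposition of $Z_\ga(Q)$ is simpler, and conflating the two cases here would actually make the ``which fixed point lies on $Q$'' bookkeeping you worry about for type~I appear in the complex case, where it does not arise. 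Separating the three rank-one models (Borel vs.\ torus vs.\ normalizer of torus in $PGL_2$) is the cleanest way to make the argument airtight.
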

For a proof of this fact, and for more details on the definitions of complex, non-compact imaginary, etc., the reader may consult \cite{Richardson-Springer-90}.

The preceding terminology being particular to the symmetric case, we now return to the setting where $H$ is an arbitrary spherical subgroup.  In \cite{Brion-01}, the poset graph for the weak order on $H \backslash G/B$ is endowed with additional data as follows:  Whenever $Y' = s_{\ga} \cdot Y \neq Y$ and the degree of $\pi_{\ga}|_Y$ is $2$ (i.e. in the symmetric cases, whenever $\ga$ is non-compact imaginary type II for $Y$), $Y$ and $Y'$ are connected by a double edge.  Otherwise, the edge connecting $Y$ to $Y'$ is simple.  Each edge, whether simple or double, is also labeled with the appropriate simple root $\ga$, or perhaps an index $i$ if $s_{\ga} = s_{\ga_i}$ for a chosen ordering on the simple roots.

If $w \in W$, with $s_{i_1} \hdots s_{i_k}$ a reduced expression for $w$, set 
\[ w \cdot Y = s_{i_1} \cdot (s_{i_2} \cdot \hdots (s_{i_k} \cdot Y) \hdots ). \]
This is well-defined, independent of the choice of reduced expression for $w$, and defines an action of a certain monoid $M(W)$ on the set of $H$-orbit closures (\cite{Richardson-Springer-90}).  As a set, the monoid $M(W)$ is comprised of elements $m(w)$, one for each $w \in W$.  The multiplication on $M(W)$ is defined inductively by
\[ m(s)m(w) = 
\begin{cases}
	m(sw) & \text{ if $l(sw) > l(w)$}, \\
	m(w) & \text{ otherwise.}
\end{cases} \]

(We will use the notation $w \cdot Y$, as opposed to $m(w) \cdot Y$, to indicate this action, with the understanding that this defines an action of $M(W)$, and not of $W$.)

Suppose that $Y$ is an $H$-orbit closure on $G/B$ of codimension $d$.  Define the following subset of $W$:
\[ W(Y) := \{ w \in W \ \vert \ w \cdot Y = G/B \text{ and } l(w) = d\}. \]
(Note that in this definition, ``$G/B$" refers to the closure of the dense, open orbit.)  Elements of $W(Y)$ are precisely those $w$ such that there is a path connecting $Y$ to the top vertex of the weak order graph, the product of whose edge labels is $w$.  For any $w \in W(Y)$, denote by $D(w)$ the number of double edges in such a path.  (Although there may be more than one, any such path has the same number of double edges, so $D(w)$ \textit{is} well-defined.  See \cite[Lemma 5]{Brion-01}.)

We now recall a theorem of Brion, alluded to in the introduction, which we will ultimately use to obtain positive rules for Schubert structure constants.
\begin{theorem}[\cite{Brion-01}]\label{thm:brion}
With notation as above, in $H^*(G/B)$, the fundamental class of $Y$ is expressed in the Schubert basis as follows:
\[ [Y] = \displaystyle\sum_{w \in W(Y)} 2^{D(w)} S_w. \]
\end{theorem}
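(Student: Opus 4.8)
The plan is to pin down each coefficient in the expansion $[Y] = \sum_{w} c_w S_w$ as a top intersection number on $G/B$, and then to evaluate that number by pushing $[Y]$ up the tower of $\bbP^1$-bundles recorded by a path in the weak order graph. Write $N = \dim(G/B)$ and let $d$ be the codimension of $Y$, so $[Y]\in H^{2d}(G/B)$. Degree reasons force $c_w = 0$ unless $l(w) = d$; and for $l(w) = d$, Poincar\'e duality of the Schubert basis identifies $c_w$ with $\int_{G/B}[Y]\cdot[X_w]$, where $X_w = \overline{BwB/B}$ is the honest Schubert variety of dimension $d$, so that $[X_w] = S_{w_0w}$ is Poincar\'e-dual to $S_w$. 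Thus it suffices to evaluate the numbers $\int_{G/B}[Y]\cdot[X_w]$ for $l(w) = d$.

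For each simple root $\alpha$ consider the Demazure-type operator $D_\alpha := \pi_\alpha^*\,(\pi_\alpha)_*\colon H^*(G/B)\to H^*(G/B)$, which lowers cohomological degree by $2$. Two standard facts will be used. First, $D_\alpha$ is self-adjoint for the Poincar\'e pairing: $\int_{G/B}(D_\alpha\xi)\,\eta = \int_{G/B}\xi\,(D_\alpha\eta)$, which is immediate from the projection formula together with $\pi_\alpha^*$ being adjoint to $(\pi_\alpha)_*$. Second, for any reduced word $w = s_{i_1}\cdots s_{i_d}$ an iterated $\bbP^1$-bundle computation writes $[X_w] = D_{i_d}D_{i_{d-1}}\cdots D_{i_1}[\mathrm{pt}]$, where $[\mathrm{pt}] = S_{w_0}$; this comes from building $X_w$ one simple root at a time and using that the relevant restriction of $\pi_{i_k}$ is birational onto its image whenever the length increases. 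Combining the two facts and moving the operators across the pairing one at a time, for any $\xi\in H^{2d}(G/B)$ one gets $\int_{G/B}\xi\cdot[X_w] = \int_{G/B}(D_{i_1}\cdots D_{i_d}\,\xi)\cdot[\mathrm{pt}]$. In particular $c_w = \int_{G/B}(D_{i_1}\cdots D_{i_d}[Y])\cdot[\mathrm{pt}]$.

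The remaining ingredient concerns the effect of the $D_\alpha$ on orbit-closure classes, and this is where the weak order enters. The key lemma: if $Y = \overline Q$ is an $H$-orbit closure with $s_\alpha\cdot Y = Y'\neq Y$, then $D_\alpha[Y] = e_\alpha(Y)\,[Y']$, where $e_\alpha(Y)\in\{1,2\}$ is the degree of $\pi_\alpha|_Y$ over its image --- equivalently, $e_\alpha(Y)=1$ if the corresponding edge of the weak order graph is simple and $e_\alpha(Y)=2$ if it is a double edge. Indeed $Y' = \pi_\alpha^{-1}(\pi_\alpha(Y))$ by definition of $s_\alpha\cdot Y$; the map $\pi_\alpha|_Y$ is generically finite precisely because $s_\alpha\cdot Y\neq Y$, so $(\pi_\alpha)_*[Y] = e_\alpha(Y)\,[\pi_\alpha(Y)]$; and $\pi_\alpha^*[\pi_\alpha(Y)] = [\pi_\alpha^{-1}(\pi_\alpha(Y))] = [Y']$ by flatness of the $\bbP^1$-bundle $\pi_\alpha$. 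The identification of $e_\alpha(Y)$ with the edge multiplicity is precisely the statement recalled from Richardson--Springer theory in the paragraphs above (in the symmetric case, $e_\alpha(Y)=2$ iff $\alpha$ is non-compact imaginary of type II). Dually, if $s_\alpha\cdot Y = Y$ then $\pi_\alpha$ contracts $Y$ and $D_\alpha[Y]=0$ for dimension reasons.

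Finally, fix $w$ with $l(w) = d$, a reduced word $w = s_{i_1}\cdots s_{i_d}$, and the associated path $Y_0 = Y$, $Y_1 = s_{i_d}\cdot Y_0$, $Y_2 = s_{i_{d-1}}\cdot Y_1,\ \dots,\ Y_d = s_{i_1}\cdot Y_{d-1} = w\cdot Y$ in the weak order. Applying $D_{i_d}, D_{i_{d-1}},\dots,D_{i_1}$ from the inside out and invoking the key lemma at each step: if every step is a genuine raising then $D_{i_1}\cdots D_{i_d}[Y] = 2^{D(w)}[G/B]$ (each raising step multiplies the class by the degree, $1$ or $2$, of the corresponding edge, and $D(w)$ counts the degree-$2$ edges), whence $c_w = 2^{D(w)}\int_{G/B}[\mathrm{pt}] = 2^{D(w)}$; if some step fails to raise, the first failure sends the class to $0$ and $c_w = 0$. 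Since each step of the path drops the codimension by $0$ or $1$ and there are exactly $d$ steps starting from codimension $d$, the path raises at every step iff $w\cdot Y$ has codimension $0$, i.e.\ equals $G/B$ --- that is, iff $w\in W(Y)$. Hence $c_w = 2^{D(w)}$ for $w\in W(Y)$ and $c_w = 0$ otherwise, which is the formula. (Since $c_w$ is intrinsic, the argument also shows that the number of degree-$2$ edges along such a path depends only on $w$, reproving \cite[Lemma 5]{Brion-01}.) I expect the one genuinely substantive point to be the key lemma --- getting the left/right conventions straight between $\pi_\alpha^{-1}\pi_\alpha$ and the monoid action, and matching the edge degree $e_\alpha(Y)$ to the simple/double edge data --- with the reduced-word bookkeeping and the self-adjointness manipulations being routine.
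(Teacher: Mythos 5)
The paper does not prove this theorem; it is quoted directly from \cite{Brion-01} as known background, with no in-paper argument to compare against. So I will instead compare your proposal against Brion's original proof.

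Your argument is correct and reconstructs Brion's own approach: pin $c_w$ down as the intersection number $\int_{G/B}[Y]\cdot[X_w]$, express $[X_w]$ via the operators $D_\alpha=\pi_\alpha^*(\pi_\alpha)_*$ applied to a point class along a reduced word, transfer those operators across the pairing by self-adjointness, and evaluate them on $[Y]$ step by step using the ``raising lemma'' $D_\alpha[Y]=e_\alpha(Y)[s_\alpha\cdot Y]$ (degree $1$ or $2$ depending on whether the edge is simple or double) together with $D_\alpha[Y]=0$ when $s_\alpha\cdot Y=Y$. Your bookkeeping is consistent: the innermost operator $D_{i_d}$ is applied first, so the orbit path $Y,\ s_{i_d}\cdot Y,\ s_{i_{d-1}}\cdot(s_{i_d}\cdot Y),\dots$ terminates at $w\cdot Y$ exactly as the paper's $M(W)$-action prescribes, and the ``every step must raise'' observation correctly converts the condition $w\cdot Y=G/B$ plus $l(w)=\operatorname{codim}Y$ into membership in $W(Y)$. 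Your closing remark that this derivation incidentally reproves the path-independence of $D(w)$ (the paper's cited \cite[Lemma 5]{Brion-01}) is also accurate, since $c_w$ is intrinsic. The one place a reader might stumble is that the raising lemma $D_\alpha[X_w]=[X_{ws_\alpha}]$ uses right multiplication, so the word $s_{i_1}\cdots s_{i_d}$ is built up left-to-right under the nested application $D_{i_d}\cdots D_{i_1}$; you have this right, but it is worth stating explicitly since it is easy to transpose.
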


\section{$GL(p,\C) \times GL(q,\C)$-orbits on $GL(p+q,\C)/B$}\label{ssec:type-a}
We start with perhaps the most interesting example of a pair consisting of a classical group and a spherical Levi subgroup, the pair 
\[ (G,L)=(GL(n,\C),GL(p,\C) \times GL(q,\C)), \]
where $n=p+q$.  The Levi group $L$ in this case is symmetric.  We realize it as the subgroup of block diagonal matrices consisting of an upper-left $p \times p$ block and a lower-right $q \times q$ block.  This group is the Levi of the maximal parabolic $P$ which corresponds to the set of all simple roots less $\ga_p = x_p - x_{p+1}$.

\subsection{Richardson varieties stable under $L$}
The minimal (respectively, maximal) length coset representatives of $W/W_P$ are well-known to be the Grassmannian (respectively, anti-Grassmannian) permutations with unique descent (respectively, unique ascent) at $p$.  By Proposition \ref{prop:richardsons-stable-under-l}, Richardson varieties of the form $X_u^v$ with $u \geq v$ and $u$ (respectively, $v$) a maximal (respectively, minimal) length coset representative of $W_P \backslash W$ are stable under $L$.  Thus we shall be concerned with Richardsons of the form $X_u^v$ where $u$ is \textit{inverse} to an anti-Grassmannian permutation, and where $v$ is inverse to a Grassmannian permutation.  Such a $u$ has the property that its one-line notation is a ``shuffle" of the sets $\{p,\hdots,1\}$ and $\{n,\hdots,p+1\}$ --- that is, in the one-line notation, $p,\hdots,1$ appear in descending order, and $n,\hdots,p+1$ appear in descending order, but the two sets can be mixed together in any way.  Similarly, such a $v$ has the property that its one-line notation is a shuffle of the sets $\{1,\hdots,p\}$ and $\{p+1,\hdots,n\}$.

\subsection{Parametrizing $L \backslash G/B$}\label{sect:pq_example}
The finitely many $L$-orbits on $G/B$ are parametrized by \textit{$(p,q)$-clans}, as described in, e.g., \cite{Matsuki-Oshima-90,Yamamoto-97,McGovern-Trapa-09}.  We recall this parametrization in detail.  
\begin{definition}
A \textbf{$(p,q)$-clan} is a string of $n=p+q$ symbols, each of which is a $``+"$, a $``-"$, or a natural number.  The string must satisfy the following two properties:
\begin{enumerate}
	\item Every natural number which appears must appear exactly twice in the string.
	\item The difference in the number of plus signs and the number of minus signs in the string must be $p-q$.  (If $q > p$, then there should be $q-p$ more minus signs than plus signs.)
\end{enumerate}
\end{definition}

We only consider such strings up to an equivalence which says, essentially, that it is the positions of matching natural numbers, rather than the actual values of the numbers, which determine the clan.  So, for instance, the clans $(1,2,1,2)$, $(2,1,2,1)$, and $(5,7,5,7)$ are all the same, since they all have matching natural numbers in positions $1$ and $3$, and also in positions $2$ and $4$.  On the other hand, $(1,2,2,1)$ is a different clan, since it has matching natural numbers in positions $1$ and $4$, and in positions $2$ and $3$.

The set of $(p,q)$-clans is in bijection with the set of $L$-orbits on $G/B$.  Moreover, given a clan $\gamma$, the orbit $Q_{\gamma}$ admits an explicit linear algebraic description in terms of the combinatorics of $\gamma$.  Let $E_p = \C \left\langle e_1,\hdots,e_p \right\rangle$ be the span of the first $p$ standard basis vectors, and let $\widetilde{E_q} = \C \left\langle e_{p+1},\hdots,e_n \right\rangle$ be the span of the last $q$ standard basis vectors.  Let $\pi: \C^n \rightarrow E_p$ be the projection onto $E_p$.

For any clan $\gamma=(c_1,\hdots,c_n)$, and for any $i,j$ with $i<j$, define the following quantities:
\begin{enumerate}
	\item $\gamma(i; +) = $ the total number of plus signs and pairs of equal natural numbers occurring among $(c_1,\hdots,c_i)$;
	\item $\gamma(i; -) = $ the total number of minus signs and pairs of equal natural numbers occurring among $(c_1,\hdots,c_i)$; and
	\item $\gamma(i; j) = $ the number of pairs of equal natural numbers $c_s = c_t \in \N$ with $s \leq i < j < t$.
\end{enumerate}

For example, for the $(2,2)$-clan $\gamma=(1,+,1,-)$, we have that 
\begin{enumerate}
	\item $\gamma(i; +) = 0,1,2,2$ for $i=1,2,3,4$;
	\item $\gamma(i; -) = 0,0,1,2$ for $i=1,2,3,4$; and
	\item $\gamma(i;j) = 1,0,0,0,0,0$ for $(i,j) = (1,2), (1,3), (1,4), (2,3), (2,4), (3,4)$.
\end{enumerate}

With all of this notation defined, we have the following theorem on $L$-orbits on $G/B$:
\begin{theorem}[\cite{Matsuki-Oshima-90,Yamamoto-97}]\label{thm:orbit_description}
Suppose $p+q=n$.  For a $(p,q)$-clan $\gamma$, define $Q_{\gamma}$ to be the set of all flags $F_{\bullet}$ having the following three properties for all $i,j$ ($i<j$):
\begin{enumerate}
	\item $\dim(F_i \cap E_p) = \gamma(i; +)$
	\item $\dim(F_i \cap \widetilde{E_q}) = \gamma(i; -)$
	\item $\dim(\pi(F_i) + F_j) = j + \gamma(i; j)$
\end{enumerate}

For each $(p,q)$-clan $\gamma$, $Q_{\gamma}$ is nonempty and stable under $L$.  In fact, $Q_{\gamma}$ is a single $L$-orbit on $G/B$.

Conversely, every $L$-orbit on $G/B$ is of the form $Q_{\gamma}$ for some $(p,q)$-clan $\gamma$.  Hence the association $\gamma \mapsto Q_{\gamma}$ defines a bijection between the set of all $(p,q)$-clans and the set of $L$-orbits on $G/B$.
\end{theorem}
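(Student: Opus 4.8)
The plan is to establish the theorem in three stages: (i) show that each $Q_\gamma$ as defined is a nonempty, $L$-stable, locally closed subvariety of $G/B$; (ii) show that it is a single $L$-orbit; and (iii) show that these orbits exhaust $L\backslash G/B$ and that distinct clans give distinct orbits. For (i), the $L$-stability is immediate: the subspaces $E_p$ and $\widetilde{E_q}$ are $L$-stable by construction, and $\pi$ is $L$-equivariant, so each of the three conditions on $F_\bullet$ is preserved by the $L$-action. Nonemptiness I would verify by exhibiting, for each clan $\gamma$, an explicit ``model flag'': for a pair of equal natural numbers in positions $s<t$ one uses a basis vector of the form $e_k + e_{k'}$ contributing to $F_s$ and a vector like $e_k - e_{k'}$ (or a second independent combination) entering at step $t$, with $e_k \in E_p$, $e_{k'}\in\widetilde{E_q}$; a $+$ in position $i$ contributes a standard basis vector from $E_p$ at step $i$; a $-$ contributes one from $\widetilde{E_q}$. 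One then checks directly that this flag satisfies the three dimension conditions — a bookkeeping exercise matching the combinatorial definitions of $\gamma(i;+)$, $\gamma(i;-)$, $\gamma(i;j)$.

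For (ii), the key is to show $L$ acts transitively on $Q_\gamma$. The cleanest approach is a normal-form argument: given an arbitrary flag $F_\bullet \in Q_\gamma$, I would show that one can successively apply elements of $L = GL(E_p)\times GL(\widetilde{E_q})$ to bring $F_\bullet$ to the model flag constructed above. This is an induction on the length of the flag. At step $i$, having already normalized $F_1\subset\cdots\subset F_{i-1}$, one uses the three dimension conditions at index $i$ to pin down the ``type'' of the new vector spanning $F_i/F_{i-1}$ — whether it lies in $E_p$ (forced when $\gamma(i;+)$ jumps), in $\widetilde{E_q}$ (when $\gamma(i;-)$ jumps), or is a genuinely mixed vector pairing with some earlier or later index (governed by the $\gamma(i;j)$ data and condition (3), $\dim(\pi(F_i)+F_j)=j+\gamma(i;j)$) — and then uses the residual freedom in $L$ (the stabilizer of $F_1,\dots,F_{i-1}$ inside $L$ still acts with enough room) to rescale or adjust that vector to match the model. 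The connectedness of $L$ is what makes ``single orbit'' rather than ``finite union of orbits'' come out. Condition (3) is the subtle one: it encodes precisely how the ``$c_s=c_t$'' pairs link step $s$ to step $t$, and the quantity $\dim(\pi(F_i)+F_j)$ is exactly the invariant that distinguishes a mixed vector straddling the cut from a pair of pure vectors.

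For (iii), that every $L$-orbit arises this way, I would argue by a counting/exhaustion principle: the three conditions defining $Q_\gamma$ are $L$-invariant and their values are constant on $L$-orbits, so the assignment $Q\mapsto$ (the tuple of all $\dim(F_i\cap E_p)$, $\dim(F_i\cap\widetilde{E_q})$, $\dim(\pi(F_i)+F_j)$) descends to a well-defined map from $L\backslash G/B$ to clans (one must check that any flag produces data of the form coming from \emph{some} clan — i.e. the invariants satisfy the constraints forcing every appearing natural number to pair up, and the $\pm$-difference to be $p-q$; this follows from elementary dimension inequalities for subspaces of $\C^n = E_p\oplus\widetilde{E_q}$). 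Since we have already shown each fiber $Q_\gamma$ is a single orbit and nonempty, this map is a bijection. The injectivity direction (distinct clans, distinct orbits) is immediate because distinct clans give different invariant tuples by construction. The main obstacle, I expect, is stage (ii) — specifically, carefully tracking in the normalization induction how condition (3) constrains the mixed vectors and verifying that the stabilizer in $L$ of the partial flag always has enough remaining freedom to complete the reduction to the model; this is where all three conditions must be used in concert and where the combinatorics of matching natural numbers is genuinely doing work. (An alternative to the hands-on normal form is to invoke the Matsuki–Oshima / Yamamoto classification of $L\backslash G/B$ by clans as a black box and then merely verify that the specific variety $Q_\gamma$ written here is the orbit attached to $\gamma$ by matching a single invariant; but I would prefer the self-contained argument sketched above.)
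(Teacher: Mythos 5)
The paper does not prove this theorem: it is stated as a citation to \cite{Matsuki-Oshima-90} and \cite{Yamamoto-97}, and the remark that immediately follows it explains that the clan parametrization first appeared (without proof) in the former, while both the proof of correctness and the explicit linear-algebraic description of $Q_\gamma$ appear in the latter. So the ``alternative'' you mention at the very end of your proposal --- invoking the Matsuki--Oshima/Yamamoto classification as a black box --- is in fact exactly what the paper does. Your self-contained sketch is therefore a genuinely different, and more ambitious, route; its outline matches what one would find in Yamamoto's paper: $L$-stability is immediate because $E_p$, $\widetilde{E_q}$, and the projection $\pi$ are $L$-equivariant; nonemptiness comes from a model flag; transitivity is obtained by inductively normalizing an arbitrary flag in $Q_\gamma$ to the model; exhaustion follows from showing that an arbitrary flag's invariant tuple is realized by a unique clan. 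You correctly single out stage (ii), the transitivity normal form, as the delicate step: one must verify at each $i$ that the stabilizer in $L$ of the already-normalized $F_1\subset\cdots\subset F_{i-1}$ still acts transitively on the vectors consistent with the clan data at step $i$, and condition (3), $\dim(\pi(F_i)+F_j) = j + \gamma(i;j)$, is precisely where the matching of natural numbers does real work.

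Two small points are worth writing out rather than waving at, though neither is a gap in the approach. First, for a matched pair $c_s=c_t$ the cleaner model flag takes $F_s/F_{s-1}=\langle e_a+e_b\rangle$ with $a\le p<b$ and $F_t/F_{t-1}=\langle e_a\rangle$; your $e_a-e_b$ at step $t$ also works, but the essential feature to emphasize is that $F_t$ then contains both $e_a$ and $e_b$, which is what makes $\gamma(\cdot;+)$ and $\gamma(\cdot;-)$ both jump at $t$. Second, in stage (iii) one must actually check that the tuple $\bigl(\dim(F_i\cap E_p),\ \dim(F_i\cap\widetilde{E_q}),\ \dim(\pi(F_i)+F_j)\bigr)$ attached to an arbitrary flag satisfies the constraints (each of the first two is weakly increasing with jumps $0$ or $1$, their sum is at most $i$, they reach $p$ and $q$ at $i=n$, and the $\gamma(i;j)$ data determine a consistent matching on the ``mixed'' positions) guaranteeing that it arises from some clan; this is where your ``elementary dimension inequalities'' statement needs to be unpacked. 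These are exactly the bookkeeping steps you flagged, and \cite{Yamamoto-97} is the reference against which to check them.
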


\begin{remark}
The parametrization of $L$-orbits on $G/B$ by $(p,q)$-clans was described first in \cite{Matsuki-Oshima-90}.  In that paper, no proof of the correctness of the parametrization is given, and the above linear algebraic description of $Q_{\gamma}$ does not appear.  Both the proof and the explicit description of $Q_{\gamma}$ appear in \cite{Yamamoto-97}.
\end{remark}

We also recall the following formula, given in \cite{Yamamoto-97}, for the dimension of the $L$-orbit $Q_{\gamma}$ in terms of the clan $\gamma$.  First define the ``length" of a clan $\gamma$ to be 
\begin{equation}
\label{e:length}
l(\gamma) = \sum_{c_i=c_j \in \bbN, i<j}\left( j-i-
\#\{k \in \bbN \; | \; c_s = c_t = k \text{ for some }
s < i<t<j \} \right ).
\end{equation}
Then
\begin{equation}
\label{e:orbit-dimension}
\dim(Q_{\gamma}) = 
d(L) +  l(\gamma),
\end{equation}
where $d(L)$ is the dimension of the flag variety for $L$, namely
$\frac12(p(p-1) + q(q-1))$.

Next, we describe the weak order on $L \backslash G/B$ in terms of this parametrization, following \cite{Yamamoto-97,McGovern-Trapa-09}.  Let $\frt$ be the Cartan subalgebra of $\text{Lie}(G) = \mathfrak{gl}(n,\C)$ consisting of diagonal matrices.  Let $x_i$ ($i=1,\hdots,n$) be coordinates on $\frt$, with
\[ x_i(\text{diag}(a_1,\hdots,a_n)) = a_i. \]
The simple roots are of the form $\ga_i = x_i - x_{i+1}$ ($i=1,\hdots,n-1$).  The root $\ga_i$ is complex for the orbit $Q_{\gamma}$ corresponding to $\gamma=(c_1,\hdots,c_n)$ if and only if $(c_i,c_{i+1})$ satisfy one of the following:
\begin{enumerate}
	\item $c_i$ is a sign, $c_{i+1}$ is a number, and the mate of $c_{i+1}$ occurs to the right of $c_{i+1}$;
	\item $c_i$ is a number, $c_{i+1}$ is a sign, and the mate of $c_i$ occurs to the left of $c_i$; or
	\item $c_i$ and $c_{i+1}$ are unequal natural numbers, and the mate of $c_i$ occurs to the left of the mate of $c_{i+1}$.
\end{enumerate}

In these cases, the orbit $s_{\ga_i} \cdot Q_{\gamma}$ is $Q_{\gamma'}$, where the clan $\gamma'$ is obtained from $\gamma$ by interchanging $c_i$ and $c_{i+1}$.

As examples of (1), (2), and (3) above, when $p=q=2$, we have
\begin{enumerate}
	\item $s_{\ga_1} \cdot (+,1,-,1) = (1,+,-,1)$; 
	\item $s_{\ga_2} \cdot (1,1,+,-) = (1,+,1,-)$; and
	\item $s_{\ga_2} \cdot (1,1,2,2) = (1,2,1,2)$.
\end{enumerate}

On the other hand, $\ga_i$ is non-compact imaginary for $Q_{\gamma}$ if and only if $(c_i,c_{i+1})$ are opposite signs.  In this case, $s_{\ga_i} \cdot Q_{\gamma} = Q_{\gamma''}$, where $\gamma''$ is obtained from $\gamma$ by replacing the signs in positions $(i,i+1)$ by matching natural numbers.  So, for instance, when $p=q=2$, $s_{\ga_2} \cdot (+,+,-,-) = (+,1,1,-)$.

The cross-action of $w \in S_n$ on any clan $\gamma$ (more precisely, on the orbit $Q_{\gamma}$) is the obvious one, by permutation of the characters of $\gamma$.  In particular, when $\ga_i$ is non-compact imaginary for $\gamma$, the cross-action of $s_{\ga_i}$ interchanges the opposite signs in positions $(i,i+1)$.  Thus for a non-compact imaginary root $\ga_i$, $s_{\ga_i} \times Q_{\gamma} \neq Q_{\gamma}$, and so we see that \textit{all non-compact imaginary roots are of type I}.  This establishes

\begin{prop}\label{prop:single_edges}
In the weak order graph for $L \backslash G/B$, all edges are single.
\end{prop}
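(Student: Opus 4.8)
The plan is to trace through the classification of simple roots for $L \backslash G/B$ already laid out in the discussion preceding the statement, and to verify that the only mechanism producing a double edge — a non-compact imaginary root of type II — never occurs here. Recall from the excerpt that an edge $Y \prec Y' = s_{\ga} \cdot Y$ is double precisely when $\ga$ is non-compact imaginary for the relevant orbit and $s_{\ga} \times Q = Q$ (type II), and simple otherwise. So the entire content to check is that whenever $\ga_i$ is non-compact imaginary for $Q_\gamma$, the cross-action of $s_{\ga_i}$ moves $Q_\gamma$.

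First I would recall the explicit description, stated just above, of when $\ga_i = x_i - x_{i+1}$ is non-compact imaginary for the orbit $Q_\gamma$ attached to the clan $\gamma = (c_1,\dots,c_n)$: this happens exactly when $c_i$ and $c_{i+1}$ are opposite signs (one a $+$, the other a $-$). Next I would invoke the description, also given above, of the cross-action of $w \in S_n$ on clans: it simply permutes the characters of $\gamma$. In particular $s_{\ga_i} \times Q_\gamma = Q_{\gamma'}$ where $\gamma'$ is obtained from $\gamma$ by transposing the entries in positions $i$ and $i+1$. If $c_i$ and $c_{i+1}$ are opposite signs, then swapping them genuinely changes the string (a $(+,-)$ becomes a $(-,+)$, or vice versa), and since clan equivalence only identifies strings differing in the \emph{values} of matching natural numbers, not in the placement of signs, $\gamma' \neq \gamma$ as clans. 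Hence $s_{\ga_i} \times Q_\gamma \neq Q_\gamma$, so $\ga_i$ is of type I for $Q_\gamma$. This is precisely the assertion "all non-compact imaginary roots are of type I."

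Finally I would conclude: by the proposition quoted in the excerpt (the one on degrees of $\pi_\ga|_Y$ in the symmetric case), a double edge in the weak order graph arises only from a non-compact imaginary type II root; since no such root exists for any $L$-orbit, every edge $Y \prec s_\ga \cdot Y$ in the weak order graph for $L \backslash G/B$ is simple.

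I do not anticipate a genuine obstacle: the statement is essentially an immediate corollary of the combinatorial description of the weak order that has just been recalled, and the proof is a two-line verification that transposing two opposite signs in a clan yields a non-equivalent clan. The only thing to be careful about is to state clearly why the swapped string is inequivalent to the original as a clan — i.e. to point explicitly to the fact that the clan equivalence relation does not touch the positions of $+$ and $-$ symbols — so that the reader sees the cross-action is non-trivial in every non-compact imaginary case.
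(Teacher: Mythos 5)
Your proof is correct and follows precisely the same line of reasoning as the paper: identify that only a non-compact imaginary type~II root can produce a double edge, observe that non-compact imaginary roots occur exactly at adjacent opposite signs, and note that the cross-action swaps those signs and hence changes the clan, forcing type~I. The one small addition you make --- explicitly flagging that clan equivalence does not identify strings with signs in different positions --- is a sound clarification of a step the paper leaves implicit.
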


\begin{remark}
The previous proposition follows from the discussion of the preceding paragraph, but can also be deduced using \cite[Corollary 2]{Brion-01}.  Indeed, this example is mentioned specifically in the discussion immediately following the statement of that corollary.
\end{remark}

Relative to the parametrization described here, the closed orbits (those minimal in the weak order) are those whose clans consist solely of $p$ plus signs and $q$ minus signs.  The dense open orbit is the one whose clan is $\gamma_0 := (1,2,\hdots,q-1,q,+,\hdots,+,q,q-1,\hdots,2,1)$ ($p-q$ plus signs appearing in the middle) if $p \geq q$, or $(1,2,\hdots,p-1,p,-,\hdots,-,p,p-1,\hdots,2,1)$ ($q-p$ minus signs appearing in the middle) if $q > p$.

With all of these combinatorics in hand, we recast the $M(W)$-action on $L$-orbits as a sequence of ``operations" on $(p,q)$-clans.  Let $\gamma=(c_1,\hdots,c_n)$ be a $(p,q)$-clan.  Given a simple root $s_i = s_{\ga_i}$, consider the following two possible operations on $\gamma$:
\begin{enumerate}[(a)]
	\item Interchange characters $c_i$ and $c_{i+1}$.
	\item Replace characters $c_i$ and $c_{i+1}$ by matching natural numbers.
\end{enumerate}

Then for $i=1,\hdots,n-1$,
\begin{enumerate}
	\item If $c_i$ is a sign, $c_{i+1}$ is a natural number, and the mate of $c_{i+1}$ occurs to the right of $c_{i+1}$, then $s_i \cdot \gamma$ is obtained from $\gamma$ by operation (a).
	\item If $c_i$ is a number, $c_{i+1}$ is a sign, and the mate of $c_i$ occurs to the left of $c_i$, then $s_i \cdot \gamma$ is obtained from $\gamma$ by operation (a).
	\item If $c_i$ and $c_{i+1}$ are unequal natural numbers, with the mate of $c_i$ occurring to the left of the mate for $c_{i+1}$, then $s_i \cdot \gamma$ is obtained from $\gamma$ by operation (a).
	\item If $c_i$ and $c_{i+1}$ are opposite signs, then $s_i \cdot \gamma$ is obtained from $\gamma$ by operation (b).
\end{enumerate}

If none of the above hold, then $s_i \cdot \gamma = \gamma$.

This extends in the natural way to an action of $M(W)$ on the set of all $(p,q)$-clans.  Note that if $Y_{\gamma} = \overline{Q_{\gamma}}$ is an $L$-orbit closure, the geometric condition that $w \cdot Y_{\gamma} = G/B$ is equivalent to the combinatorial condition that $w \cdot \gamma = \gamma_0$.

\subsection{The coincidence of $L$-orbit closures and Richardson varieties}
We know that every Richardson variety of the form $X_u^v$ with $u$ a shuffle of $p,\hdots,1$ and $n,\hdots,p+1$, and $v$ a shuffle of $1,\hdots,p$ and $p+1,\hdots,n$ is the closure of an $L$-orbit.  Now, we make this correspondence explicit by assigning the appropriate $(p,q)$-clan to such a pair $u,v$.  First, we note the following easy lemma on exactly when $u \geq v$ for such a pair $u,v$.

\begin{lemma}\label{lemma:bruhat-comparability}
Let $u,v$ be as described above.  For each $i$ between $1$ and $n$, define
\[ F(u,v,i) := \#\{j \mid 1 \leq j \leq i,\ u(j) > p, \text{ and } v(j) \leq p\}, \]
and
\[ S(u,v,i) := \#\{j \mid 1 \leq j \leq i,\ u(j) \leq p, \text{ and } v(j) > p\}. \]
Then $u \geq v$ if and only if $F(u,v,i) \geq S(u,v,i)$ for every $i=1,\hdots,n$.
\end{lemma}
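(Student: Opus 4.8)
The plan is to funnel both sides of the claimed equivalence through a single, more transparent condition. Write $a(i) = \#\{j \le i : u(j) \le p\}$ and $a'(i) = \#\{j \le i : v(j) \le p\}$, and set $b(i) = i - a(i)$, $b'(i) = i - a'(i)$. I claim it suffices to prove that $u \ge v$ holds if and only if $a(i) \le a'(i)$ for all $i$. Indeed, partitioning $\{1,\dots,i\}$ according to whether $u(j) \le p$ and whether $v(j) \le p$, and writing $\beta$ for the number of $j$ with $u(j) \le p < v(j)$ and $\gamma$ for the number with $v(j) \le p < u(j)$, one reads off $S(u,v,i) = \beta$, $F(u,v,i) = \gamma$, and $a(i) - a'(i) = \beta - \gamma$; hence $F(u,v,i) \ge S(u,v,i)$ exactly when $a(i) \le a'(i)$, and the reduction is complete.

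Next I would make the sorted length-$i$ prefixes of $u$ and $v$ explicit from the shuffle structure. Because the entries of $u$ lying in $\{1,\dots,p\}$ occur in descending order in one-line notation, and likewise those in $\{p+1,\dots,n\}$, the set $\{u(1),\dots,u(i)\}$ must equal $\{p-a(i)+1,\dots,p\} \cup \{n-b(i)+1,\dots,n\}$; since $a(i) \le p$ and $b(i) \le q$ automatically, here $p < n-b(i)+1$, so in ascending order this prefix reads $p-a(i)+1,\dots,p,\,n-b(i)+1,\dots,n$ --- an ``$a(i)$-block'' of small values followed by a ``$b(i)$-block'' of large values. Dually, the entries of $v$ in $\{1,\dots,p\}$ and in $\{p+1,\dots,n\}$ each occur in ascending order, so $\{v(1),\dots,v(i)\}$ is $\{1,\dots,a'(i)\} \cup \{p+1,\dots,p+b'(i)\}$, reading $1,\dots,a'(i),\,p+1,\dots,p+b'(i)$ in ascending order.

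With these descriptions in hand I would invoke Definition \ref{def:bruhat-2}: $u \ge v$ means that for every $i$ the sorted length-$i$ prefix of $v$ is entrywise $\le$ that of $u$. Fixing $i$ and abbreviating $a = a(i)$, $a' = a'(i)$, $b = b(i)$, $b' = b'(i)$, I would check that the prefix inequality at $i$ is equivalent to $a \le a'$. If $a \le a'$, comparing the two sorted tuples entry by entry on the three ranges $1 \le k \le a$, $a < k \le a'$, and $a' < k \le i$ reduces the desired inequality in each range to one of $a \le p$, $i \le n$, or $i \le q + a'$ (the last being the automatic bound $b'(i) = i - a' \le q$), all of which hold. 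Conversely, if $a > a'$, then at position $k = a$ the $u$-prefix has value $p$ (the top of its small block) while the $v$-prefix has value $p + (a - a') > p$, so the prefix inequality fails at $i$. Taking the conjunction over all $i$ then yields $u \ge v$ if and only if $a(i) \le a'(i)$ for all $i$, if and only if $F(u,v,i) \ge S(u,v,i)$ for all $i$.

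None of the steps is deep; the one place I would be most careful is the entrywise comparison of the sorted prefixes --- specifically, verifying that the small block and the large block of each prefix do not interleave (which is exactly where $a(i) \le p$ and $b(i), b'(i) \le q$ get used) and that the three boundary ranges are set up correctly.
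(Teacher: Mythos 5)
Your proof is correct and takes essentially the same route as the paper: both invoke Definition~\ref{def:bruhat-2}, exploit the shuffle structure of $u$ and $v$ to write the sorted length-$i$ prefixes in closed form, and reduce the entrywise comparison to comparing how many entries of each prefix lie in $\{1,\dots,p\}$, which is exactly the count $F\ge S$ (the paper phrases this via an auxiliary quantity $P(u,v,i)$ rather than your $a(i),a'(i)$, but it is the same bookkeeping). The only difference is one of granularity: where the paper asserts the equivalence of the entrywise prefix comparison with the count condition as "clear," you spell it out via the three-range case analysis and the explicit failure at position $k=a$ when $a>a'$.
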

\begin{proof}
We use the characterization of the Bruhat order given in Definition \ref{def:bruhat-2}.  Since $u$ is a shuffle of $p,\hdots,1$ and $n,\hdots,p+1$, for any $i \in [n]$, when $\{u(1),\hdots,u(i)\}$ is arranged in ascending order, the result is of the form
\[ \{s,s+1,\hdots,p,\vert,t,t+1,\hdots,n\}, \]
for some $s \leq p$ and for some $t > p$.  (The vertical bar marks the point at which the values change from being less than or equal to $p$ to being greater than $p$.)

Similarly, since $v$ is a shuffle of $1,\hdots,p$ and $p+1,\hdots,n$, when $\{v(1),\hdots,v(i)\}$ is arranged in ascending order, the result is
\[ \{1,2,\hdots,h,\vert,p+1,p+2,\hdots,k\}, \]
for some $h \leq p$ and $k > p$.

Comparing these sets, it is clear that the second set is element-wise less than or equal to the first if and only if the second set has at least as many elements which are less than or equal to $p$ as the first set does.  (Said another way, the vertical bar in the second set appears at least as far to the right as the vertical bar in the first set does.)  Defining
\[ P(u,v,i) := \{j \mid 1 \leq j \leq i,\ u(j) \leq p, \text{ and } v(j) \leq p\}, \]
we see that the number of elements less than or equal to $p$ in the first set is $P(u,v,i) + S(u,v,i)$, while the number of elements of the second set which are less than or equal to $p$ is $P(u,v,i) + F(u,v,i)$.  Thus we require that $P(u,v,i) + S(u,v,i) \leq P(u,v,i) + F(u,v,i)$, or that $S(u,v,i) \leq F(u,v,i)$.
\end{proof}

We now define a $(p,q)$-clan, denoted $\gamma(u,v)$, associated to such a pair $u,v$.

\begin{definition}
Given $u \geq v$ as above, define the $(p,q)$-clan $\gamma(u,v) = (e_1,\hdots,e_n)$ as follows, starting with $i=1$ and moving from left to right:
\begin{itemize}
	\item If $u(i),v(i) \leq p$, $e_i = +$;
	\item If $u(i),v(i) > p$, $e_i = -$;
	\item If $u(i) > p$, $v(i) \leq p$, $e_i$ is the \textit{first} occurrence of a natural number;
	\item If $u(i) \leq p$, $v(i) > p$, $e_i$ is the \textit{second} occurrence of the most recently occurring natural number which does not yet have a mate.
\end{itemize}
\end{definition}
For example, taking $p=q=3$ and the pair $u=365421$, $v=142356$, we see that $e_1=+$, $e_2=-$, $e_3=1$ (first occurrence), $e_4 = 2$ (first occurrence), $e_5 = 2$ (second occurrence of the $2$, the most recently appearing first occurrence without a mate), and $e_6=1$.  Thus $\gamma(u,v) = (+,-,1,2,2,1)$.

Note that Lemma \ref{lemma:bruhat-comparability} guarantees that this definition makes sense.  Indeed, when $u \geq v$, there are always at least as many occurrences of the third bullet above as of the fourth bullet as we move from left to right, so in an instance where $e_i$ is supposed to be a second occurrence of a natural number, we can always determine ``the most recently occurring natural number which does not yet have a mate."

\begin{theorem}\label{thm:k-orbits-richardson-varieties}
Suppose $u,v$ is a pair of permutations with $u \geq v$ and $X_u^v$ stable under $L$.  Let $\gamma = \gamma(u,v)$.  Then $X_u^v = \overline{Q_{\gamma}}$.
\end{theorem}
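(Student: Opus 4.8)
The plan is to identify the clan $\gamma$ by evaluating, for a single generic flag lying in $X_u^v$, the three combinatorial invariants that characterize an $L$-orbit in Theorem~\ref{thm:orbit_description}. (Recall that the hypothesis that $X_u^v$ is $L$-stable forces $u$ to be a shuffle of $p,\dots,1$ and $n,\dots,p+1$ and $v$ a shuffle of $1,\dots,p$ and $p+1,\dots,n$, so that Lemma~\ref{lemma:bruhat-comparability} applies and $\gamma(u,v)$ is defined.) Since $X_u^v = X_u \cap X^v$ is stable under $L$ and $L$ acts with finitely many orbits on $G/B$, the variety $X_u^v$ is irreducible and equals the closure of its dense $L$-orbit; write $X_u^v = \overline{Q_\delta}$ for some $(p,q)$-clan $\delta$, so that the goal becomes $\delta = \gamma(u,v)$. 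Now $X_u^0 \cap X^v$ is open in $X_u^v$ and contains the $T$-fixed point $uB$ (as $u \ge v$ implies $uB \in X^v$), while $X_u \cap X^v_0$ is open in $X_u^v$ and contains $vB$; being nonempty open subsets of the irreducible variety $X_u^v$, both are dense, so the open Richardson cell $R^{\circ} := X_u^0 \cap X^v_0$ is a nonempty (in fact dense open) subset of $X_u^v$ and in particular meets $Q_\delta$. Fix a flag $F_{\bullet} \in R^{\circ} \cap Q_\delta$. Since $F_{\bullet} \in X_u^0 \cap X^v_0$, it satisfies the \emph{equalities} $\dim(F_i \cap E_j) = r_u(i,j)$ and $\dim(F_i \cap \widetilde{E_j}) = r_{w_0 v}(i,j)$ for all $i,j$; since $F_{\bullet} \in Q_\delta$, Theorem~\ref{thm:orbit_description} gives $\delta(i;+) = \dim(F_i \cap E_p) = r_u(i,p)$, $\delta(i;-) = \dim(F_i \cap \widetilde{E_q}) = r_{w_0 v}(i,q)$, and $\delta(i;j) = \dim(\pi(F_i) + F_j) - j$. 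It therefore suffices to show that these three invariants of $\delta$ coincide with the corresponding quantities attached to $\gamma(u,v)$; by the bijection between clans and orbits in Theorem~\ref{thm:orbit_description} (which, with that theorem's characterization, shows the invariant sequences determine the clan), this forces $\delta = \gamma(u,v)$, hence $X_u^v = \overline{Q_{\gamma(u,v)}}$.

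The first two coincidences are purely combinatorial. From the definition of $\gamma(u,v) = (e_1, \dots, e_n)$, position $k$ carries a $+$ iff $u(k), v(k) \le p$; a $-$ iff $u(k), v(k) > p$; a first occurrence of a natural number iff $u(k) > p \ge v(k)$; a second occurrence of a natural iff $u(k) \le p < v(k)$. Since the matching rule is ``last opened, first closed'', a natural-number pair is completed within the first $i$ entries of $\gamma(u,v)$ exactly when its second occurrence appears among them. Writing $P(u,v,i), F(u,v,i), S(u,v,i)$ as in Lemma~\ref{lemma:bruhat-comparability} and $M(u,v,i) := \#\{k \le i : u(k) > p,\ v(k) > p\}$, one obtains
\[
\gamma(u,v)(i;+) = P(u,v,i) + S(u,v,i) = \#\{k \le i : u(k) \le p\} = r_u(i,p),
\]
\[
\gamma(u,v)(i;-) = M(u,v,i) + S(u,v,i) = \#\{k \le i : v(k) > p\} = r_{w_0 v}(i,q),
\]
the last equality because $w_0(m) = n+1-m$, so $(w_0 v)(k) \le q$ iff $v(k) > p$. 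These match $\delta(i;+)$ and $\delta(i;-)$; note that they already pin down the ``pattern'' of $\delta$ --- which positions are $+$, which are $-$, which open and which close a natural --- leaving only the matching of the naturals undetermined.

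The remaining identity, $\dim(\pi(F_i) + F_j) = j + \gamma(u,v)(i;j)$ for all $i < j$ (where $\gamma(u,v)(i;j)$ counts the natural-number pairs of $\gamma(u,v)$ with one coordinate $\le i$ and the other $> j$), is the heart of the matter and, I expect, the main obstacle: the orbit equations only control intersections with $E_p$ and $\widetilde{E_q}$, whereas the projection $\pi$ along $\widetilde{E_q}$ couples these. The route I would take is to exhibit one explicit flag in $R^{\circ}$ --- a flag $F_i = \langle f_1, \dots, f_i \rangle$ whose spanning vectors are supported on the coordinates dictated by the rank data of $u$ and $w_0 v$, with otherwise generic scalars --- and to compute directly, using
\[
\dim(\pi(F_i) + F_j) = j + \dim \pi(F_i) - \dim(\pi(F_i) \cap F_j), \qquad \dim \pi(F_i) = i - \dim(F_i \cap \widetilde{E_q}) = i - r_{w_0 v}(i,q),
\]
together with a hands-on analysis of $\pi(F_i) \cap F_j$. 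The combinatorial content to be extracted is precisely that the ``last opened, first closed'' bracketing defining $\gamma(u,v)$ records, for each coordinate annihilated by $\pi$ (a position $k$ with $u(k) \le p < v(k)$, i.e.\ a closing bracket), the most recent still-surviving coordinate through which that direction re-enters $\pi(F_i)$; the pair so formed contributes to $\gamma(u,v)(i;j)$ exactly when it is ``open'' across the interval $(i,j]$. Verifying this count completes the agreement of all three invariants, and hence the proof.

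An alternative to this last step would be to establish instead the single inclusion $\overline{Q_{\gamma(u,v)}} \subseteq X_u^v$ together with the dimension equality $\dim Q_{\gamma(u,v)} = l(u) - l(v) = \dim X_u^v$ (Proposition~\ref{prop:basic_richardson_varieties}) and conclude by irreducibility. Using $\dim Q_\gamma = d(L) + l(\gamma)$ with $d(L) = \binom{p}{2} + \binom{q}{2}$, and the fact that $u$ (resp.\ $v$), being a shuffle of two decreasing (resp.\ increasing) blocks, has $\binom{p}{2} + \binom{q}{2}$ (resp.\ $0$) within-block inversions, the dimension equality reduces to the combinatorial identity
\[
l(\gamma(u,v)) = \#\{a < b : u(a) > p \ge u(b)\} - \#\{a < b : v(a) > p \ge v(b)\},
\]
which holds because the matching defining $\gamma(u,v)$ is non-crossing, so the subtracted term in \eqref{e:length} vanishes and $l(\gamma(u,v))$ equals the sum of the spans $j - i$ of its matched pairs. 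Proving the geometric inclusion $\overline{Q_{\gamma(u,v)}} \subseteq X_u^v$, however, still requires controlling $\dim(F_i \cap E_j)$ for a generic flag of $Q_{\gamma(u,v)}$ and \emph{all} $j$, which is no easier than the projection computation above; I would therefore carry out the approach of the first three paragraphs.
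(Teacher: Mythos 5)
Your main route (paragraphs one through three) correctly reduces the problem to matching the three clan invariants of $\delta$ with those of $\gamma(u,v)$, and the verification of $\delta(i;+)=\gamma(u,v)(i;+)=r_u(i,p)$ and $\delta(i;-)=\gamma(u,v)(i;-)=r_{w_0v}(i,q)$ is correct and cleanly argued. But the third invariant, $\delta(i;j)=\gamma(u,v)(i;j)$, is not established: you explicitly say it is ``the heart of the matter'' and sketch only a strategy (write down an explicit flag in $R^\circ$ and compute $\dim(\pi(F_i)\cap F_j)$ by hand) without carrying it out. As written, the argument is therefore incomplete precisely at the step you yourself flag as the main obstacle.

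More importantly, you dismiss the alternative route --- the inclusion $\overline{Q_{\gamma(u,v)}}\subseteq X_u^v$ plus a dimension count --- on the grounds that it would require controlling $\dim(F_i\cap E_j)$ for \emph{all} $j$, and conclude it is ``no easier than the projection computation.'' This is the conceptual miss: it does \emph{not} require all columns of the rank matrix. The paper's proof observes that a flag $F_\bullet\in Q_\gamma$ lies in some Schubert cell $X_w^0$ whose indexing permutation satisfies $r_w(i,p)=\gamma(i;+)$ for all $i$ (just the $p$th column), and then shows that $u=u(\gamma)$ is the unique Bruhat-\emph{maximal} permutation with this $p$th column, by filling in a rank matrix whose jumps are placed as far right as possible and invoking Definition~\ref{def:bruhat-1}. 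Since $X_u=\bigcup_{w\le u}X_w^0$, this gives $F_\bullet\in X_u$ using only the first invariant; the mirror argument with $\gamma(i;-)$ gives $F_\bullet\in X^v$. The troublesome projection condition $\gamma(i;j)$ is never touched. The remaining ingredient, the identity $l(\gamma(u,v))=\#\{a<b:u(a)>p\ge u(b)\}-\#\{a<b:v(a)>p\ge v(b)\}$, you state but do not prove; it is exactly what the paper establishes through the case analysis of the pairs $(-,S)$, $(F,+)$, $(F,S)$, etc.\ using the non-crossing ($(1,2,1,2)$-avoiding) structure of $\gamma(u,v)$. So the route you set aside is in fact the shorter one, and the route you chose still has a hole at its central step.
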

\begin{proof}
We know by the general observations of the introduction that $X_u^v$ has a dense $L$-orbit.  Since each $L$-orbit closure is irreducible ($L$ being connected), and since $X_u^v$ is irreducible, we need only observe the following:
\begin{enumerate}
	\item $Q_{\gamma} \subseteq X_u^v$, and
	\item $\dim(Q_{\gamma}) = \dim(X_u^v)$.
\end{enumerate}
For the first, suppose that $F_{\bullet}$ is a flag in $Q_{\gamma}$.  Then by Theorem \ref{thm:orbit_description}, we know that
\[ \dim(F_i \cap E_p) = \gamma(i; +) \]
for each $i=1,\hdots,n$.  Thus $F_{\bullet}$ belongs to some Schubert cell whose indexing permutation has the $p$th column of its rank matrix given by the numbers $\gamma(i; +)$ for $i=1,\hdots,n$.  We show that $u$ is the unique maximal element in the Bruhat order among all permutations of this type.  Note from the definition of $\gamma$ the way in which $u$ and $\gamma$ correspond:  In the one-line notation for $u$, the numbers $p,\hdots,1$ occur, in descending order, on the $+$'s and second occurrences of natural numbers of $\gamma$, and the numbers $n,\hdots,p+1$ occur, in descending order, on the $-$'s and first occurrences of natural numbers.  Define
\[ W^+ := \{ w \in W \ \vert \ r_w(i,p) = \gamma(i;+) \ \forall i \in [n]\}, \]
and consider permutations $w \in W^+$.  They are precisely those permutations whose rank matrices are of the form
\[
\begin{pmatrix}
	r_w(1,1) & \hdots & r_w(1,p) & \hdots & r_w(1,n) \\
	\vdots & \vdots & \vdots & \vdots & \vdots \\
	r_w(n,1) & \hdots & r_w(n,p) & \hdots & r_w(n,n)
\end{pmatrix}
=
\begin{pmatrix}
	* & \hdots & \gamma(1;+) & \hdots & * \\
	\vdots & \vdots & \vdots & \vdots & \vdots \\
	* & \hdots & \gamma(n;+) & \hdots & *
\end{pmatrix}
\]

By Definition \ref{def:bruhat-1} of the Bruhat order, to see that this set contains a unique maximal element, it suffices to show that the remaining entries of the rank matrix can be ``filled in" in a way which produces a rank matrix $R$ such that any other rank matrix having the prescribed $p$th column must be greater than or equal than $R$ in every single position.

The way to accomplish this is to place the jumps as far to the right as possible on every single row, starting with the first.  Set $\gamma(0;+) = 0$.  Then for any $i \in [n]$, either $\gamma(i;+) = \gamma(i-1;+)$, or $\gamma(i;+) = \gamma(i-1;+) + 1$.

If $\gamma(i;+) = \gamma(i-1;+)$, the jump in the $i$th row has not yet occurred by the point we reach the $p$th column.  We put it as far to the right as possible, meaning that the first time we encounter such a row, we place the jump in position $n$, the second time we put it in position $n-1$, etc.

If $\gamma(i;+) = \gamma(i-1;+) + 1$, then the jump in the $i$th row \textit{has} occurred by the $p$th column.  Again, we want to place the jump as far to the right as possible, so the first time we encounter such a row, we put the jump at $p$, the second time at $p-1$, etc.

This gives us the rank matrix of a permutation which assigns the numbers $n,n-1,\hdots,p+1$, in descending order, to those $i$ with $\gamma(i;+) = \gamma(i-1;+)$ (the coordinates of the $-$'s and first occurrences of natural numbers), and which assigns the numbers $p,p-1,\hdots,1$, in descending order, to those $i$ with $\gamma(i;+) = \gamma(i-1;+) + 1$ (the coordinates of the $+$'s and second occurrences of natural numbers).  As noted above, this is precisely the permutation $u$.  Thus $Q_{\gamma} \subseteq X_u$.

A completely analogous argument shows that $Q_{\gamma} \subset X^v$.  Indeed, given $F_{\bullet} \in Q_{\gamma}$, we have by Theorem \ref{thm:orbit_description} that
\[ \dim(F_i \cap \widetilde{E_q}) = \gamma(i; -) \]
for $i=1,\hdots,n$.

Define
\[ W^- := \{w \in W \ \vert \ r_{w_0w}(i,q) = \gamma(i;-) \ \forall i \in [n]\}. \]
We know that $F_{\bullet}$ is in an opposite Schubert cell $X^w$ for $w \in W^-$.  We want to show that $v$ is the unique minimal element of $W^-$, or, equivalently, that $w_0v$ is the unique maximal element of $w_0W^-$.  Note how $v$ and $\gamma$ correspond:  the one-line notation for $v$ has the numbers $1,\hdots,p$ occurring in order on the $+$'s and first occurrences of natural numbers of $\gamma$, and the numbers $p+1,\hdots,n$ occurring in order on the $-$'s and second occurrences of natural numbers of $\gamma$.  Thus $w_0v$ has the numbers $n,\hdots,q+1$ occurring in order on the $+$'s and first occurrences of natural numbers, and $q,\hdots,1$ on the $-$'s and second occurrences of natural numbers.  Now, arguing just as we did above for $u$, one sees that $w_0v$ is the unique maximal element of the set of all permutations whose rank matrices have $q$th column given by the numbers $\gamma(i; -)$ for $i=1,\hdots,n$.  This is precisely the set $w_0W^-$.  Thus $w_0v$ is the unique maximal element of $w_0W^-$ or, equivalently, $v$ is the unique minimal element of $W^-$.  This establishes that $Q_{\gamma} \subset X^v$ and, combining with the above, that $Q_{\gamma} \subset X_u^v$.

Now, we have only to argue that the dimensions of $X_u^v$ and $Q_{\gamma}$ are the same.  The dimension of $X_u^v$ is $l(u) - l(v)$, as noted in Proposition \ref{prop:basic_richardson_varieties}.  On the other hand, if $\gamma=(c_1,\hdots,c_n)$, then the dimension of $Q_{\gamma}$ is given by Equation (\ref{e:orbit-dimension}) to be
\[ \frac{1}{2}(p(p-1) + q(q-1)) + \sum_{c_i=c_j \in \bbN, i<j}\left( j-i-
\#\{k \in \bbN \; | \; c_s = c_t = k \text{ for some }
s < i<t<j \} \right ). \]

Note, however, that the clan $\gamma$ ``avoids the pattern $(1,2,1,2)$", by which we mean that any two pairs of matching natural numbers are either nested, or disjoint.  This is a consequence of the way in which we defined $\gamma$ --- every time we see the second occurrence of a natural number, it is always the mate for the most recent unmated first occurrence to appear.  This means that for this particular clan $\gamma$, the term
\[ \#\{k \in \bbN \; | \; c_s = c_t = k \text{ for some }
s < i<t<j \} \]
is zero for every pair $c_i = c_j \in \bbN$, so our dimension formula simplifies to
\[  \frac{1}{2}(p(p-1) + q(q-1)) + \sum_{c_i=c_j \in \bbN, i<j}\left( j-i \right ). \]

Our task is to see that this number is equal to $l(u) - l(v)$.  We think of the length of a permutation $w$ as its number of inversions, i.e. the number of pairs $i,j$ with $i<j$ and $w(i) > w(j)$.  Since $u$ is a shuffle of $p,\hdots,1$ and $n,\hdots,p+1$, it has $\frac{1}{2}p(p-1)$ inversions coming from the numbers $p,\hdots,1$ being in reverse order, $\frac{1}{2}q(q-1)$ inversions coming from the numbers $n,\hdots,p+1$ being in reverse order, and some number of other inversions coming from the shuffling of the two sets together.  The permutation $v$, being a shuffle of $1,\hdots,p$ and $p+1,\hdots,n$, has \textit{all} of its inversions occurring as a result of the two sets being shuffled together.  Thus
\[ l(u) - l(v) = \]
\[ \frac{1}{2}(p(p-1) + q(q-1)) + \#\{i<j \mid u(i) \geq p+1, u(j) \leq p\} - \#\{i<j \mid v(i) \geq p+1, v(j) \leq p\}. \]
So we have reduced to showing that 
\[ \#\{i<j \mid u(i) \geq p+1, u(j) \leq p\} - \#\{i<j \mid v(i) \geq p+1, v(j) \leq p\} = \sum_{c_i=c_j \in \bbN, i<j}\left( j-i \right ). \]

If $\gamma = (c_1,\hdots,c_n)$, consider the possible values of $(c_i,c_j)$ ($i<j$) which imply that $u(i) \geq p+1$ and $u(j) \leq p$, or that $v(i) \geq p+1$ and $v(j) \leq p$.  For $u$, the possibilities are
\begin{enumerate}
	\item $(-,+)$
	\item $(-,S)$ (where $S$ indicates the second occurrence of some natural number)
	\item $(F,+)$ (where $F$ indicates the first occurrence of some natural number)
	\item $(F,S)$
\end{enumerate}

For $v$, they are
\begin{enumerate}
	\item $(-,+)$
	\item $(-,F)$
	\item $(S,+)$
	\item $(S,F)$
\end{enumerate}

For short, we denote $\#\{i<j \mid c_i = \text{`$-$' and } c_j = \text{`$+$'}\}$ by $\gamma_{-,+}$, and similarly for the other possible pairs listed above.  Thus 
\[ \#\{i<j \mid u(i) \geq p+1, u(j) \leq p\} - \#\{i<j \mid v(i) \geq p+1, v(j) \leq p\} = \]
\[ (\gamma_{-,+} + \gamma_{-,S} + \gamma_{F,+} + \gamma_{F,S}) - (\gamma_{-,+} + \gamma_{-,F} + \gamma_{S,+} + \gamma_{S,F}) = \]
\[ (\gamma_{-,S} - \gamma_{-,F}) + (\gamma_{F,+} - \gamma_{S,+}) + (\gamma_{F,S} - \gamma_{S,F}). \]

Consider first the quantity $\gamma_{-,S} - \gamma_{-,F}$.  We claim that
\[ \gamma_{-,S} - \gamma_{-,F} = \sum_{c_i=c_j \in \bbN, i<j} \#\{c_k = - \mid i < k < j\}. \]
Indeed, $(-,S)$ pairs can appear in two different ways:
\begin{enumerate}
	\item $(\hdots,1,\hdots,-^*,\hdots,1^*,\hdots)$, or
	\item $(\hdots,-^*,\hdots,1,\hdots,1^*,\hdots)$.
\end{enumerate}
The asterisks above mark the positions of the $-$ and $S$ characters under consideration.  Note that the $(-,S)$ pairs of type (2) are in $1$-to-$1$ correspondence with the $(-,F)$ pairs.  Indeed, the $(-,S)$ pair depicted in (2) above corresponds to the $(-,F)$ pair consisting of the same $-$ sign and the first occurrence of the $1$.  Conversely, every $(-,F)$ pair corresponds to the $(-,S)$ pair consisting of the same $-$ sign and the second occurrence of the number.  Thus to compute $\gamma_{-,S} - \gamma_{-,F}$, we can simply disregard $(-,S)$ pairs of type (2) above and $(-,F)$ pairs, and count the number of $(-,S)$ pairs of type (1) above.  To count the $(-,S)$ pairs of type (1) we count, for each matching pair of numbers, the number of $-$ signs enclosed by that pair.  This is precisely the sum given above.

An identical argument shows that 
\[ \gamma_{F,+} - \gamma_{S,+} = \sum_{c_i=c_j \in \bbN, i<j} \#\{c_k = + \mid i < k < j\}. \]

Finally, consider $\gamma_{F,S} - \gamma_{S,F}$.  We claim that 
\[ \gamma_{F,S} - \gamma_{S,F} = \sum_{c_i=c_j \in \bbN, i<j} \left( \#\{c_k \in \bbN \mid i < k < j\} + 1 \right). \]
Indeed, consider the ways in which $(F,S)$ pairs can appear.  First, they could be a pair of matching natural numbers, as in 
\begin{enumerate}
	\item $(\hdots,1^*,\hdots,1^*,\hdots)$.
\suspend{enumerate}
As before, we use asterisks to mark the positions of the characters under consideration.  If the $F$ and $S$ are not a matching pair, then in light of the aforementioned $(1,2,1,2)$-avoidance, there are three more possibilities:
\resume{enumerate}
	\item $(\hdots,1^*,\hdots,2,\hdots,2^*,\hdots,1,\hdots)$;
	\item $(\hdots,1,\hdots,2^*,\hdots,2,\hdots,1^*,\hdots)$;
	\item $(\hdots,1^*,\hdots,1,\hdots,2,\hdots,2^*,\hdots)$.
\end{enumerate}

Now, note that the $(F,S)$ pairs of type (4) are in $1$-to-$1$ correspondence with $(S,F)$-pairs.  Indeed, the $(F,S)$ pair of type (4) depicted above corresponds to the $(S,F)$ pair $(\hdots,1,\hdots,1^*,\hdots,2^*,\hdots,2,\hdots)$, formed by the second occurrence of the $1$ followed by the first occurrence of the $2$.  Conversely, each $(S,F)$ pair arises due to two pairs of numbers forming the pattern $(1,1,2,2)$, and thus corresponds to the $(F,S)$ pair formed by the first occurrence of the $1$ followed by the second occurrence of the $2$.  Thus to compute $\gamma_{F,S} - \gamma_{S,F}$, we can simply disregard $(F,S)$ pairs of type (4) and $(S,F)$ pairs, and count $(F,S)$ pairs of types (1), (2), and (3).  To count pairs of types (2) and (3) combined, we can simply count twice the number of occurrences of the pattern $(1,2,2,1)$ within the clan.  Equivalently, for each pair of matching natural numbers, we can count the natural numbers flanked by that pair, giving us the sum
\[ \sum_{c_i=c_j \in \bbN, i<j} \#\{c_k \in \bbN \mid i < k < j\}. \]
To add in $(F,S)$ pairs of type (1), we simply count the number of pairs of natural numbers.  This then gives
\[ \sum_{c_i=c_j \in \bbN, i<j} \left(\#\{c_k \in \bbN \mid i < k < j\} + 1\right), \]
as claimed.

Adding up $\gamma_{-,S} - \gamma_{-,F}$, $\gamma_{F,+} - \gamma_{S,+}$, and $\gamma_{F,S} - \gamma_{S,F}$, we get
\[ \sum_{c_i=c_j \in \bbN, i<j} \left( \#\{c_k = - \mid i < k < j\} + \#\{c_k = + \mid i < k < j\} + \#\{c_k \in \bbN \mid i < k < j\} + 1 \right) = \]
\[ \sum_{c_i=c_j \in \bbN, i<j} \left( j - i \right), \]
as desired.  This completes the proof.
\end{proof}

\begin{remark}
As noted in the proof of Theorem \ref{thm:k-orbits-richardson-varieties}, any Richardson variety $X_u^v$ stable under $L$ is the closure of an $L$-orbit corresponding to a clan $\gamma$ which avoids the pattern $(1,2,1,2)$.  In fact, it is clear that the converse is true.  Given such a clan $\gamma$, the orbit closure $\overline{Q_{\gamma}}$ is equal to $X_{u(\gamma)}^{v(\gamma)}$ for the permutations $u(\gamma)$, $v(\gamma)$ whose one-line notations are obtained as follows:
\begin{itemize}
	\item For $u(\gamma)$, place $p,\hdots,1$ (in descending order) on the $+$'s and second occurrences of natural numbers of $\gamma$, and $n,\hdots,p+1$ (in descending order) on the $-$'s and first occurrences of natural numbers.
	\item For $v(\gamma)$, place $1,\hdots,p$ (in ascending order) on the $+$'s and first occurrences of natural numbers of $\gamma$, and $p+1,\hdots,n$ (in ascending order) on the $-$'s and second occurrences of natural numbers. 
\end{itemize}
Using this observation, one can draw some interesting conclusions on properties of $L$-orbit closures in terms of the combinatorial properties of their clans.  For instance, most of the pattern-avoidance criteria for (rational) smoothness of an $L$-orbit closure given in \cite{McGovern-09a} can be recovered by restricting attention to the $(1,2,1,2)$-avoiding case and using known combinatorial characterizations of the singular loci of Schubert varieties.  This will be discussed in future work, joint with Alexander Woo.
\end{remark}

Taken together, Theorem \ref{thm:k-orbits-richardson-varieties}, Theorem \ref{thm:brion}, and the combinatorics laid out in Subsection \ref{sect:pq_example} give a positive (indeed, multiplicity-free) rule for structure constants $c_{w_0u,v}^w$ when $u$ is inverse to an anti-Grassmannian permutation with unique ascent at $p$, $v$ is inverse to a Grassmannian permutation with unique descent at $p$, and $u \geq v$.

\begin{theorem}\label{thm:structure_constants}
Let $\gamma_0$ be the clan parametrizing the open dense $L$-orbit on $G/B$, as in Subsection \ref{sect:pq_example}.  Suppose that $X_u^v$ is a Richardson variety stable under the action of $L$ (so that $u,v$ are as in the statement of Lemma \ref{lemma:bruhat-comparability}).  Then for $w$ of the appropriate length (namely $l(w) = l(w_0u) + l(v)$),
\[ 
c_{w_0u,v}^w = 
\begin{cases}
	1 & \text{ if $w \cdot \gamma(u,v) = \gamma_0$}, \\
	0 & \text{ otherwise.}
\end{cases}
\]
\end{theorem}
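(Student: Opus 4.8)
The plan is to assemble Theorem~\ref{thm:structure_constants} directly from the three ingredients already in hand: the identification \eqref{eqn:classes_of_richardson_varieties} of the Richardson class $[X_u^v]$ with the product $S_{w_0u} \cdot S_v$; Brion's formula (Theorem~\ref{thm:brion}) for the class of a spherical orbit closure in the Schubert basis; and Theorem~\ref{thm:k-orbits-richardson-varieties}, which identifies $X_u^v$ with the closure $\overline{Q_{\gamma(u,v)}}$ of an $L$-orbit. First I would invoke Proposition~\ref{prop:richardsons-stable-under-l} together with the description of minimal/maximal coset representatives to confirm that, for $u,v$ as in Lemma~\ref{lemma:bruhat-comparability}, the Richardson variety $X_u^v$ is indeed $L$-stable and hence (being irreducible with finitely many $L$-orbits) is the closure of a single $L$-orbit; by Theorem~\ref{thm:k-orbits-richardson-varieties} that orbit is $Q_{\gamma(u,v)}$, so $[X_u^v] = [\,\overline{Q_{\gamma(u,v)}}\,]$ in $H^*(G/B)$.

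Next I would apply Brion's theorem to $Y := \overline{Q_{\gamma(u,v)}}$: writing $d = \operatorname{codim}(Y)$, we have
\[
[Y] = \sum_{w \in W(Y)} 2^{D(w)} S_w,
\]
where $W(Y) = \{w \in W \mid w \cdot Y = G/B,\ l(w) = d\}$ and $D(w)$ counts double edges along a corresponding path in the weak order graph. The crucial simplification is Proposition~\ref{prop:single_edges}: in the weak order graph for $L\backslash G/B$ every edge is single, so $D(w) = 0$ for every $w \in W(Y)$ and every term in Brion's sum has coefficient $2^0 = 1$. Thus $[Y] = \sum_{w \in W(Y)} S_w$, a multiplicity-free expansion. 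Combining with \eqref{eqn:classes_of_richardson_varieties} gives $S_{w_0u} \cdot S_v = \sum_{w \in W(Y)} S_w$, whence $c_{w_0u,v}^w$ equals $1$ if $w \in W(Y)$ and $0$ otherwise. It remains only to translate the membership condition $w \in W(Y)$ into the combinatorial statement $w \cdot \gamma(u,v) = \gamma_0$ and to check the degree constraint. For the latter: $w \in W(Y)$ forces $l(w) = \operatorname{codim}(Y) = \dim(G/B) - \dim(X_u^v) = \dim(G/B) - (l(u) - l(v))$; since $\dim(G/B) = l(w_0)$ and $l(w_0u) = l(w_0) - l(u)$, this is exactly $l(w) = l(w_0u) + l(v)$, matching the hypothesis $l(w) = l(w_0u)+l(v)$ in the statement (and also matching the degree $l(S_{w_0u}\cdot S_v)$). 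For the combinatorial translation: by the remarks at the end of Subsection~\ref{sect:pq_example}, the relation $w \cdot Y_\gamma = G/B$ holds in the geometric $M(W)$-action if and only if $w \cdot \gamma = \gamma_0$ in the clan action, where $\gamma_0$ is the clan of the dense orbit; applying this with $\gamma = \gamma(u,v)$ yields precisely the case distinction in the theorem.

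The one place requiring a little care — and what I would flag as the main point rather than a routine step — is the bookkeeping that the length $d = l(w)$ appearing in $W(Y)$ agrees with $l(w_0u) + l(v)$; this is where Proposition~\ref{prop:basic_richardson_varieties} (giving $\dim X_u^v = l(u) - l(v)$) is used, and one must be careful with the identities $\dim(G/B) = l(w_0)$ and $l(w_0u) = l(w_0) - l(u)$ and with the fact that the degree-$l(w_0u)+l(v)$ part of $S_{w_0u}\cdot S_v$ is the \emph{entire} product (Schubert classes being homogeneous). Once this is pinned down, the proof is just the concatenation of the cited results, so I would keep it short: state the chain of equalities $c_{w_0u,v}^w = [\text{coeff. of } S_w \text{ in } S_{w_0u}\cdot S_v] = [\text{coeff. of } S_w \text{ in } [\,\overline{Q_{\gamma(u,v)}}\,]] = [w \in W(\overline{Q_{\gamma(u,v)}})] = [w \cdot \gamma(u,v) = \gamma_0]$, justifying each equality by the appropriate reference, and note the length constraint explicitly.
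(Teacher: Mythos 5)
Your proposal is correct and follows essentially the same argument as the paper: identify $[X_u^v] = S_{w_0u}\cdot S_v$ via equation~(\ref{eqn:classes_of_richardson_varieties}), equate this with $[\overline{Q_{\gamma(u,v)}}]$ via Theorem~\ref{thm:k-orbits-richardson-varieties}, expand by Brion's Theorem~\ref{thm:brion}, and use Proposition~\ref{prop:single_edges} to collapse the coefficients $2^{D(w)}$ to $1$. The only difference is that you spell out the codimension bookkeeping ($\dim(G/B)=l(w_0)$, $l(w_0u)=l(w_0)-l(u)$) slightly more explicitly than the paper does; otherwise the chain of references and the logical structure are the same.
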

\begin{proof}
Let $Y=\overline{Q_{\gamma(u,v)}}$.  By Theorem \ref{thm:k-orbits-richardson-varieties}, along with equation (\ref{eqn:classes_of_richardson_varieties}), we have 
\[ [Y] = [X_u^v] = S_{w_0u} \cdot S_v, \]
so the structure constants $c_{w_0u,v}^w$ are identically the coefficients of the various $S_w$ in the Schubert basis expansion of $[Y]$.

The fact that all such coefficients are $0$ or $1$ follows from Proposition \ref{prop:single_edges} and Theorem \ref{thm:brion}.  Note that requiring that $w$ be ``of the appropriate length" (i.e. requiring that $S_w$ live in the only degree it could in order for $c_{w_0u,v}^w$ to be non-zero) is equivalent to requiring that $l(w) = \text{codim}(Y)$, as we do in the definition of $W(Y)$ prior to the statement of Theorem \ref{thm:brion}.  Indeed, the codimension of $Y$ is precisely
\[ \dim(G/B) - \dim(X_{u}^v) = l(w_0u) + l(v). \]
Thus by Theorem \ref{thm:brion}, the only other requirement we must impose on $w$ for $c_{w_0u,v}^w = 1$ is that $w \cdot Y = G/B$.  As was noted at the end of Subsection \ref{sect:pq_example}, this is equivalent to the combinatorial condition that $w \cdot \gamma(u,v)=\gamma_0$.
\end{proof}

\begin{example}\label{ex:example-1}
Consider the Schubert product $S_u \cdot S_v$ with $u = 31425$, $v=14253$.  This is the class of the Richardson variety $X_{35241}^{14253}$, which corresponds to the $(3,2)$-clan $\gamma(35241,14253) = (+,-,+,-,+)$.  We have $l(31425) = l(14253) = 3$, and there are $20$ elements of $S_5$ of length $6$.  Table 1 of the Appendix shows each of these $20$ elements as words in the simple reflections, the clan obtained from computing the action of each on the clan $\gamma = (+,-,+,-,+)$, and the corresponding structure constant $c_{u,v}^w$ specified by Theorem \ref{thm:structure_constants}.
\end{example}

\section{Remarks on Other Cases}
\subsection{Symmetric Cases}\label{sec:symmetric}
As noted in the introduction, there are other pairs $(G,L)$ consisting of a simple classical group and a spherical Levi subgroup.  In this section, we make some brief remarks and offer roughly stated conjectures applying to the pairs $(SO(2n+1,\C),\C^* \times SO(2n-1,\C))$ (type $B$) and $(SO(2n,\C),\C^* \times SO(2n-2,\C))$ (type $D$).  The few specifics that we give here apply to the type $B$ case, for simplicity's sake.  They extend in an obvious way to the type $D$ case, with some extra notation.

Here, $L$ is the Levi factor of the maximal parabolic subgroup $P$ of $G$ corresponding to omission of the simple root $\ga_1 = x_1 - x_2$.  Viewing elements of the Weyl group $W$ as signed permutations of $\{1,\hdots,n\}$, the $L$-stable Richardsons are of the form $X_u^v$ where $u$ has $\overline{2},\hdots,\overline{n}$ appearing in order in the one-line notation, $v$ has $2,\hdots,n$ appearing in order in the one-line notation, and where $u \geq v$.  (The requirement that $u \geq v$ turns out to correspond to a requirement on the relative positions of the $1$ or $\overline{1}$ in the one-line notations for $u$ and $v$.)

By the general remarks of the introduction, for such a pair $u,v$, the Richardson variety $X_u^v$ coincides with an $L$-orbit closure.  One could turn this into a Schubert calculus rule given an explicit combinatorial model for the $L$-orbits, along with an understanding of its weak order.  Alas, unlike in the case we considered in type $A$, no combinatorial model for these orbits has been described in the literature.  What \textit{has} been described (\cite{Matsuki-Oshima-90}) is a combinatorial model for the orbits of the symmetric subgroup $K=S(O(2,\C) \times O(2n-1,\C))$.  As explained in \cite{Matsuki-Oshima-90}, the $K$-orbits are parametrized by ``symmetric" $(2,2n-1)$-clans, and the weak order is completely understood in terms of this parametrization.  (We remark that the notation of \cite{Matsuki-Oshima-90} is a bit different, in that the symbols used in that paper more efficiently give only the first half of the $(2,2n-1)$-clan, with the second half being determined from the first by symmetry.  The translation between the two notations, and the corresponding reinterpretation of the weak order, is described in detail in \cite{Wyser-Thesis}.)

Note that $K$ is disconnected, having two components, and that $L=K^0$.  This means that some $K$-orbits will be disconnected, and will split as a union of two $L$-orbits, while others will be connected, and will thus coincide with a single $L$-orbit.  Thus a parametrization of the $L$-orbits can be obtained from the known parametrization of the $K$-orbits so long as we understand precisely which $K$-orbits are disconnected.  Preferably, this could be stated as a combinatorial condition on the clan parametrizing the $K$-orbit.  Alas, the author has only a conjecture on this matter (attributed to Peter Trapa).  That conjecture has been verified using ATLAS (available at \url{http://www.liegroups.org/}) through moderately high rank.

Given a proof of this conjecture, one can easily give a description of the weak order on $L \backslash G/B$ using the (known) weak order on $K \backslash G/B$.  Assuming Trapa's conjecture on $L \backslash G/B$ to be valid and taking this conjectural weak order as a starting point, one can then identify the $L$-stable Richardson varieties described above with the appropriate symmetric $(2,2n-1)$-clans and deduce Schubert calculus rules from this association, just as we did in the type $A$ case in Section \ref{ssec:type-a}.  In type $B$, this leads to the conjecture that for suitable $u,v$, and for $w$ of the appropriate length, all Schubert constants $c_{u,v}^w$ are either $0$, $1$, or $2$, with $c_{u,v}^w = 0$ only if $w \cdot \gamma(u,v) \neq \gamma_0$; $c_{u,v}^w = 2$  only if $w \cdot \gamma(u,v) = \gamma_0$ and the computation of the action of $w$ on $\gamma(u,v)$ involves a certain specific operation; and $c_{u,v}^w = 1$ otherwise.  (As in Section \ref{ssec:type-a}, here $\gamma(u,v)$ denotes the clan identified with the Richardson variety $X_u^v$, and $\gamma_0$ denotes the clan parametrizing the open, dense $L$-orbit on $G/B$.)  In type $D$, the situation is very similar, but the $M(W)$-action is defined a bit differently, and all relevant Schubert constants turn out to be either $0$ or $1$.

The interested reader can find the specific details of these conjectures in \cite{Wyser-13}.
	
\subsection{Non-symmetric Cases}\label{sec:non-symmetric}
	The remaining two pairs $(G,L)$ consisting of a classical group and spherical Levi subgroup not yet considered in this paper or in \cite{Wyser-11b} are $(Sp(2n,\C),\C^* \times Sp(2n-2,\C))$ and $(SO(2n+1,\C),GL(n,\C))$.  The former pair is quite similar to the pairs mentioned in the previous subsection, and indeed the Richardson varieties stable under that Levi are precisely the same as those described in the type $B$ case of the previous subsection.  Likewise, the latter pair is very similar to the pairs considered in \cite{Wyser-11b}, and the Richardson varieties stable under \textit{that} Levi bear precisely the same descriptions as those considered in the type $C$ case of \cite{Wyser-11b}.  However, combinatorial descriptions of $L \backslash G/B$ for these cases have not appeared in the literature, and are not (as far as the author can tell) easily deducible from, say, the results of \cite{Matsuki-Oshima-90}.  This is because, unlike the cases considered to this point, these Levis are \textit{not} (identity components of) symmetric subgroups.
	
	This complicates matters a bit if one wishes to prove the correctness of a combinatorial model for the $L$-orbits.  In the type $B$ case above, for example, a natural guess might be that the $L$-orbits are parametrized by some family of ``skew-symmetric" $(n,n)$-clans, as in the corresponding cases in types $C$ and $D$.  If one could translate such a clan into a linear algebraic description of the corresponding orbit as a set of flags, then it should be very easy to check that that set is stable under $L$.  However, it may be more difficult to prove that $L$ is actually transitive on the set.  In the cases of symmetric subgroups, one can get around this difficulty by applying some known combinatorial machinery underlying the ATLAS software, explained in \cite{Adams-DuCloux-09}.  However, this machinery is particular to the symmetric case.  And we have said nothing of the weak order or the appropriate placement of double edges in the weak order graph.
	
	Despite these difficulties, a natural guess might be that the Schubert calculus rules associated to $L$-stable Richardsons are the same as those alluded to in Subsection \ref{sec:symmetric} and those described in detail in \cite{Wyser-11b}.  Experimentation seems to suggest that for the pair $(Sp(2n,\C),\C^* \times Sp(2n-2,\C))$, and for relevant choices of $u,v$, the set 
	\[ \{w \in W \mid c_{u,v}^w \neq 0\} \]
	is identical to the analogous set for the type $B$ pair $(SO(2n+1,\C),\C^* \times SO(2n-1,\C))$ discussed in the previous subsection.  Likewise, for the pair $(SO(2n+1,\C),GL(n,\C))$, the relevant Schubert constants $c_{u,v}^w$ are non-zero for precisely the same $w$ as in the type $C$ case $(Sp(2n,\C),GL(n,\C))$ discussed in \cite{Wyser-11b}.  However, even in small rank, one finds examples where these Schubert constants are not the same.  This indicates that although the weak Bruhat interval $[Y,G/B]$ associated to an $L$-orbit closure $Y$ in these other cases may be isomorphic (as a graph with labeled edges) to the corresponding weak Bruhat interval in the known cases, there are some subtle differences in where double edges are placed.

\newpage

\begin{table}[h]
	\caption{Example \ref{ex:example-1}:  Computing the $(3,2)$ Schubert product $S_{31425} \cdot S_{14253}$}
	\begin{tabular}{|c|c|c|}
		\hline
		Length $6$ Element $w$ & $w \cdot (+,-,+,-,+)$ & $c_{u,v}^w$ \\ \hline
		$[4, 3, 2, 4, 3, 4]$ & $(+,1,2,2,1)$  & $0$ \\ \hline
		$[1, 3, 2, 4, 3, 4]$ & $(1,+,2,2,1)$ & $0$ \\ \hline
		$[1, 4, 3, 2, 3, 4]$ & $(1,+,2,2,1)$ & $0$ \\ \hline
		$[1, 4, 3, 2, 4, 3]$ & $(1,+,2,2,1)$ & $0$ \\ \hline
		$[2, 1, 2, 4, 3, 4]$ & $(1,2,2,+,1)$ & $0$ \\ \hline
		$[2, 1, 3, 2, 3, 4]$ & $(1,2,+,2,1)$ & $1$ \\ \hline
		$[2, 1, 3, 2, 4, 3]$ & $(1,2,+,2,1)$ & $1$ \\ \hline
		$[2, 1, 4, 3, 2, 4]$ & $(1,2,+,2,1)$ & $1$ \\ \hline
		$[2, 1, 4, 3, 2, 3]$ & $(1,2,2,+,1)$ & $0$ \\ \hline
		$[3, 2, 1, 4, 3, 4]$ & $(1,2,+,1,2)$ & $0$ \\ \hline
		$[3, 2, 1, 2, 3, 4]$ & $(1,2,+,2,1)$ & $1$\\ \hline
		$[3, 2, 1, 2, 4, 3]$ & $(1,2,+,2,1)$ & $1$ \\ \hline
		$[3, 2, 1, 3, 2, 4]$ & $(1,2,+,1,2)$ & $0$ \\ \hline
		$[3, 2, 1, 3, 2, 3]$ & $(1,2,2,1,+)$ & $0$ \\ \hline
		$[3, 2, 1, 4, 3, 2]$ & $(1,2,+,2,1)$ & $1$ \\ \hline
		$[4, 3, 2, 1, 3, 4]$ & $(1,2,+,2,1)$ & $1$  \\ \hline
		$[4, 3, 2, 1, 4, 3]$ & $(1,2,+,2,1)$ & $1$ \\ \hline
		$[4, 3, 2, 1, 2, 4]$ & $(1,+,2,2,1)$ & $0$ \\ \hline
		$[4, 3, 2, 1, 2, 3]$ & $(1,2,2,+,1)$ & $0$\\ \hline
		$[4, 3, 2, 1, 3, 2]$ & $(1,2,2,+,1)$ & $0$ \\ \hline
	\end{tabular}
\end{table}

\bibliographystyle{alpha}
\bibliography{../sourceDatabase}

\end{document}